\newtheorem{thm}{Theorem}[section]
\newtheorem{prop}[thm]{Proposition}
\newtheorem{cor}[thm]{Corollary}
\newtheorem{lm}[thm]{Lemma}
\newtheorem{obs}[thm]{Observation}
\newtheorem{definition}[thm]{Definition}
\newtheorem{question}{Question}
\def\OO{\mathcal{O}}
\def\P{\mathcal{P}}
\def\C{\mathbb{C}}
\def\PP{\mathbb{P}}
\def\Z{\mathbb{Z}}
\begin{document}
\title{Principal part bundles on $\PP^n$ and quiver representations}
\date{October 20, 2011}
\author{Riccardo Re}
\address{Dipartimento di Matematica e Informatica, Universit\'a di Catania, Italy}
\email{riccardo@dmi.unict.it}
\thanks{The author thanks the Department of Mathematics of the University of Milano for the warm hospitality during the preparation of this article.}
\subjclass[2010]{Primary 14F05; Secondary 14F10, 14M15, 32L10}
\begin{abstract}{We study the principal parts bundles $\P^k\OO_{\PP^n}(d)$ as homogeneous bundles and we describe their associated quiver representations.  With this technique we show that if $n\geq 2$ and $0\leq d<k$ then there exists an invariant splitting $\P^k\OO_{\PP^n}(d)=Q_{k,d}\oplus (S^dV\otimes \OO_{\PP^n})$ with $Q_{k,d}$ a stable homogeneous vector bundle. The splitting properties of such bundles were previously known only for $n=1$ or $k\leq d$ or $d<0$. Moreover we show that the Taylor truncation maps    $H^0\P^k\OO_{\PP^n}(d)\to H^0\P^h\OO_{\PP^n}(d)$, defined for any $h\leq k$ and any $d$, have maximal rank.}\end{abstract}
\maketitle
\section{Introduction}
The principal part sheaves $\P^k(\mathcal{F})$  were defined by Grothendieck in \cite{groth} for any relative scheme $X\to S$ and any $\OO_X$-module $\mathcal{F}$. In the context of differential geometry and $\mathcal{F}$ a vector bundle on $X$, they are also known as the {\em bundles of k-jets} of sections of $\mathcal{F}$. Grothendieck uses p.p. sheaves in his definition of the sheaf of 1-differentials $\Omega_{X/S}$ and to define linear differential operators between $\OO_X$-modules and introduce the universal property of p.p. sheaves with respect to differential operators.  

It is not trivial to understand the structure of p.p. sheaves already in the case of $X=\PP(V)$, a projective space over an algebraically closed field. In this context the simplest sheaves $\mathcal{F}$ to consider are the line bundles $\OO_{\PP^n}(d)$. In the article \cite{som} projective spaces and abelian varieties were shown to be the only projective varieties for which there exists a line bundle $L$ such that $\P^k(L)=M^{\oplus r}$ for some $k$, with $M$ another line bundle. In \cite{perk} and \cite{maake1}, \cite{maake2} one can find many results on the structure of $\P^k (L)$ for $L$ a line bundle on a toric variety or a homogeneous space, respectively. In the cited papers the splitting type of $\P^k\OO_{\PP^n}(d)$ for $k\leq d$ or $d<0$ is calculated. For example it is shown that $\P^k\OO_{\PP^n}(d)\cong S^kV\otimes\OO_{\PP^n}(d-k)$ for $d\geq k$. On the other hand we have not been able to find any reference for the splitting type of $\P^k\OO_{\PP^n}(d)$ in the cases when $n\geq 2$ and $k>d\geq 0$ in the existing literature. The aim of this paper is to fill that gap, by proving the following theorem.
\begin{thm}\label{thm:main1} If $n\geq 2$ and $k>d\geq 0$ one has $\P^k\OO_{\PP^n}(d)\cong Q_{k,d}\oplus (S^dV\otimes\OO_{\PP^n})$, with $Q_{k,d}$ a stable homogeneous vector bundle on $\PP^n$. Moreover, for any $k>d\geq 0$, $Q_{k,d}$ admits a $Sl(n+1)$-invariant filtration 
$0=F_{k+1}\subset F_k\subset\cdots\subset F_{d+1}=Q_{k,d}$ with quotients $F_{i}/F_{i+1}=S^i\Omega_{\PP^n}(d)$.\end{thm}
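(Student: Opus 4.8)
The plan is to read off both assertions from the jet filtration of $\P^k\OO_{\PP^n}(d)$ together with the quiver representation attached to it as a homogeneous bundle. Recall the $Sl(n+1)$-equivariant principal parts sequences
\[
0\to S^i\Omega_{\PP^n}(d)\to\P^i\OO_{\PP^n}(d)\to\P^{i-1}\OO_{\PP^n}(d)\to 0,
\]
which, on setting $W_i=\ker\bigl(\P^k\OO_{\PP^n}(d)\to\P^{i-1}\OO_{\PP^n}(d)\bigr)$, produce an invariant filtration $\P^k\OO_{\PP^n}(d)=W_0\supset W_1\supset\cdots\supset W_{k+1}=0$ with $W_i/W_{i+1}\cong S^i\Omega_{\PP^n}(d)$. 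I would define $Q_{k,d}:=W_{d+1}$; then the tail $W_{d+1}\supset\cdots\supset W_{k+1}=0$ is exactly the claimed filtration $F_i=W_i$ with $F_i/F_{i+1}=S^i\Omega_{\PP^n}(d)$ for $d+1\le i\le k$. Thus the filtration statement is immediate once the splitting is in hand.

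For the splitting I would use the case $k=d$ of the result quoted in the introduction, namely $\P^d\OO_{\PP^n}(d)\cong S^dV\otimes\OO_{\PP^n}$, together with the truncation $t:\P^k\OO_{\PP^n}(d)\to\P^d\OO_{\PP^n}(d)$, whose kernel is $W_{d+1}=Q_{k,d}$. It remains to split $t$ equivariantly. The global $k$-jet map $\sigma:S^dV\otimes\OO_{\PP^n}=H^0(\OO_{\PP^n}(d))\otimes\OO_{\PP^n}\to\P^k\OO_{\PP^n}(d)$ sending a form to its $k$-jet is $Sl(n+1)$-equivariant, and $t\circ\sigma$ is the $d$-jet map onto $\P^d\OO_{\PP^n}(d)$. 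Since $k>d$, a form of degree $d$ is recovered from its $d$-jet at any single point, so $t\circ\sigma$ is fibrewise injective, hence an isomorphism by the rank equality $\dim S^dV=\operatorname{rk}\P^d\OO_{\PP^n}(d)$; therefore $\sigma$ splits $t$ and $\P^k\OO_{\PP^n}(d)\cong Q_{k,d}\oplus(S^dV\otimes\OO_{\PP^n})$. In quiver language this says that among the arrows $\psi_i$ encoding the $\mathfrak n$-action $S^i\Omega_{\PP^n}(d)\to S^{i+1}\Omega_{\PP^n}(d)\otimes\mathfrak n^*$, the single arrow $\psi_d$ vanishes while the others do not, so the quiver representation of $\P^k\OO_{\PP^n}(d)$ breaks at the junction $i=d$ into the connected sub-chain on $\{0,\dots,d\}$ (which is $S^dV\otimes\OO_{\PP^n}$) and the connected sub-chain on $\{d+1,\dots,k\}$ (which is $Q_{k,d}$).

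For stability I would argue with slopes and invariance. A Chern class computation gives $\mu\bigl(S^i\Omega_{\PP^n}(d)\bigr)=d-\tfrac{(n+1)i}{n}$, strictly decreasing in $i$; hence $\mu(Q_{k,d})$ is the rank-weighted average of these values over $d+1\le i\le k$, and $\mu(W_j)<\mu(Q_{k,d})$ for every $j>d+1$, since passing from $Q_{k,d}=W_{d+1}$ to the smaller tail $W_j$ discards precisely the pieces of largest slope. Because $Q_{k,d}$ is homogeneous its maximal destabilizing subsheaf is canonical, hence $Sl(n+1)$-invariant, so it suffices to show that every proper nonzero invariant subbundle has slope $<\mu(Q_{k,d})$. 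Invariant subbundles correspond to subrepresentations of the quiver representation of $Q_{k,d}$; as each graded piece $S^i\Omega_{\PP^n}(d)$ is an irreducible, multiplicity-free Levi module and each arrow $\psi_i$ with $i\ge d+1$ is nonzero (hence surjective onto the irreducible next piece), the representation is uniserial: any invariant subbundle containing $S^i\Omega_{\PP^n}(d)$ contains $S^{i'}\Omega_{\PP^n}(d)$ for all $i'\ge i$, so the only ones are the tails $W_j$. The slope inequality then gives stability. The hypothesis $n\ge 2$ enters precisely here, ensuring the pieces have rank $>1$; for $n=1$ the summands are line bundles and $Q_{k,d}$ merely decomposes further.

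The main obstacle is the exact evaluation of the quiver arrows $\psi_i$: one must show that the scalar governing $\psi_i$ is proportional to $(d-i)$, so that it vanishes exactly at $i=d$ and is nonzero otherwise. This one computation simultaneously yields the splitting (via $\psi_d=0$), the indecomposability of $Q_{k,d}$, and the uniserial structure (via $\psi_i\neq0$ for $i>d$) that drives the stability argument; the remaining ingredients are the routine slope bookkeeping and the standard reduction to invariant subsheaves for homogeneous bundles.
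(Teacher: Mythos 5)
Your filtration and splitting arguments are correct and essentially coincide with the paper's: the paper also builds the filtration by composing the truncations $\P^i\OO(d)\to\P^{i-1}\OO(d)$, and splits $\P^k\OO(d)\to\P^d\OO(d)\cong S^dV\otimes\OO$ by noting that this surjection onto a trivial bundle is surjective on global sections --- your fibrewise injectivity of $t\circ\sigma$ is the same fact read through the jet evaluation. The stability part, however, has two genuine gaps.

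First, everything in your stability argument rests on the nonvanishing of the arrows $[S^i\Omega(d)]\to[S^{i+1}\Omega(d)]$ for $i\ge d+1$, which you yourself flag as ``the main obstacle'' and do not prove; without it the representation need not be uniserial and the invariant subsheaves need not be only the tails $W_j$. The paper does not compute these scalars on $\P^k\OO(d)$ directly: it first identifies $Q_{k,d}$ with $\ker\bigl(\eta^{k-d}:S^kV\otimes\OO(d-k)\to S^dV\otimes\OO\bigr)$ using the operators $\xi^k$, $\eta$ and the commutation relation of Lemma \ref{lemma:compoperators}, and then reads off the quiver of $Q_{k,d}$ from that of $S^kV\otimes\OO(d-k)$, whose arrows are shown to be nonzero by an induction with Pieri/Olver maps (Lemma \ref{lemma:quivertensor} and Proposition \ref{prop:schurquiver}). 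Some argument of this kind must be supplied; it is the technical core of the theorem.

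Second, the reduction ``it suffices to show that every proper nonzero invariant subbundle has slope $<\mu(Q_{k,d})$'' proves semistability but not stability. A homogeneous bundle of the form $F\otimes W$ with $F$ stable and $W$ an irreducible $G$-module of dimension $>1$ (for instance $V\otimes\OO$ on $\PP^n$) has no proper nonzero invariant subsheaf of slope $\ge\mu$ and yet is not stable; this is exactly the exceptional case in the criterion of Theorem \ref{thm:rohfai}, and the paper devotes the second half of its stability proof to excluding $Q_{k,d}\cong S_\mu V\otimes E$ by exhibiting incomparable vertices that such a tensor product would force into the quiver. Your remark that the uniserial quiver makes $Q_{k,d}$ indecomposable is precisely the ingredient that closes this gap --- an indecomposable semistable bundle that is not stable is not polystable, so its socle is a proper, nonzero, canonical and hence invariant subsheaf of the same slope --- but you never connect indecomposability to the stability argument, so as written the proof is incomplete at this point as well.
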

We are also interested in understanding the Taylor truncation maps $\tau_{k,h}:\P^k\OO_{\PP^n}(d)\to\P^h\OO_{\PP^n}(d)$, with $k\geq h$, in view of possible application to differential algebra questions. In this context we get the following result.
\begin{thm}\label{thm:main2}  For any $k\geq h$ and $d$ the Taylor truncation map induces a linear map
$\tau: H^0\P^k\OO_{\PP^n}(d)\to H^0\P^h\OO_{\PP^n}(d)$ with maximal rank.
\end{thm}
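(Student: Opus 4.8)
The plan is to read off the maximal rank from the decomposition of both cohomology groups into irreducible $Sl(n+1)$-representations, using that $\tau$ is $Sl(n+1)$-equivariant. Set $K_{k,h}=\ker\big(\tau_{k,h}\colon\P^k\OO_{\PP^n}(d)\to\P^h\OO_{\PP^n}(d)\big)$. It inherits the principal parts filtration, hence an invariant filtration whose graded pieces are $S^i\Omega_{\PP^n}(d)$ for $h<i\le k$, and the long exact sequence shows that $\tau$ is injective exactly when $H^0(K_{k,h})=0$ and surjective exactly when the connecting map $H^0\P^h\OO_{\PP^n}(d)\to H^1(K_{k,h})$ vanishes. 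Since maximal rank means injectivity when the source is no larger than the target and surjectivity otherwise, it suffices to determine these groups together with the dimension comparison.

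First I would compute the cohomology of each graded piece by Bott's theorem. The homogeneous bundle $S^i\Omega_{\PP^n}(d)$ is irreducible of weight $(d-i,i,0,\dots,0)$, so it has a single nonzero cohomology group: $H^0\neq0$ exactly when $d\ge 2i$, where it is the irreducible $\mathbb S_{(d-i,i)}V$ of highest weight $(d-i,i,0,\dots,0)$, and $H^1\neq0$ exactly when $i-1\le d\le 2i-2$, where it is $\mathbb S_{(i-1,d-i+1)}V$. The arithmetically decisive point is the coincidence $\mathbb S_{(d-i,i)}V\cong H^1\big(S^{d-i+1}\Omega_{\PP^n}(d)\big)$, which pins down exactly which sections of a low graded piece can be cancelled against the $H^1$ of a higher one in the spectral sequence of the filtration, and thus explains why the naive sum of the $H^0$'s overcounts.

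Next I would assemble the global decomposition of $H^0\P^k\OO_{\PP^n}(d)$ in three regimes. For $d<0$ no graded piece has sections, so $H^0=0$ and $\tau$ is trivially of maximal rank. For $d\ge k$ the stable isomorphism $\P^k\OO_{\PP^n}(d)\cong S^kV\otimes\OO_{\PP^n}(d-k)$ gives $H^0=S^kV\otimes S^{d-k}V$, whose Pieri decomposition is the multiplicity-free sum $\bigoplus_{i=0}^{\min(k,d-k)}\mathbb S_{(d-i,i)}V$. For $0\le d<k$ Theorem \ref{thm:main1} is exactly what is needed: the summand $S^dV\otimes\OO_{\PP^n}$ contributes $\mathbb S_{(d,0)}V$, while every graded piece of $Q_{k,d}$ has $i\ge d+1$, hence $d<2i$ and no sections, so $H^0\P^k\OO_{\PP^n}(d)=\mathbb S_{(d,0)}V$. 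In every case $H^0\P^k\OO_{\PP^n}(d)$ is multiplicity-free, its constituents indexed by an initial segment $\{0,1,\dots,r(k,d)\}$, with $\mathbb S_{(d-i,i)}V$ coming from the $i$-th graded level.

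The conclusion then follows from a nesting argument. As $H^0\P^k\OO_{\PP^n}(d)$ and $H^0\P^h\OO_{\PP^n}(d)$ are multiplicity-free with constituents indexed by the nested initial segments $\{0,\dots,r(k,d)\}$ and $\{0,\dots,r(h,d)\}$, Schur's lemma forces $\tau$ to be block diagonal: it carries each copy of $\mathbb S_{(d-i,i)}V$ to the copy in the target by a scalar and annihilates the unshared constituents. On a shared constituent one has $i\le r(h,d)\le h$, and at such a graded level $\tau_{k,h}$ induces the identity of $S^i\Omega_{\PP^n}(d)$; tracking this identity through the filtration shows the scalar is nonzero, so $\tau$ is an isomorphism on every shared constituent. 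Since one index set contains the other, $\tau$ is injective or surjective according to whether $r(k,d)\le r(h,d)$ or $r(k,d)\ge r(h,d)$, which is maximal rank. I expect the genuine obstacle to be the intermediate range $2h+2\le d\le 2k-2$, where $K_{k,h}$ has both sections and nonzero $H^1$ so that $\tau$ is neither visibly injective nor visibly surjective; it is precisely there that multiplicity-freeness, which rigidifies the equivariant map, together with the initial-segment structure of the constituents (a consequence of Theorem \ref{thm:main1} when $d<k$) and the nonvanishing of the diagonal scalars from the associated graded, becomes indispensable.
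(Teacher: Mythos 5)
Your argument is correct, but it takes a genuinely different route from the one in the paper. The paper first reduces to the case $h<k\le d$ (the cases $d<0$ and $k>d$ being handled by the computation $H^0\P^k\OO(d)\cong S^dV$, which forces the map to factor through $H^0\P^d\OO(d)$), then identifies $\tau$ with the explicit $G$-invariant contraction operator $\eta^{k-h}$ acting on $S^kV\otimes S^{d-k}V$, uses the self-adjointness of $\eta$ under $V\cong V^{\vee}$ to reduce maximal rank to injectivity when the source has the smaller dimension (shown to be equivalent to $d\le h+k$), and finally kills the kernel, identified with $H^0Q_{k,h}(d-h)$, by invoking Boralevi's Theorem A together with the cancellation $H^0S^i\Omega(d)\cong S_{(d-i,i)}V\cong H^1S^{d-i+1}\Omega(d)$ coming from Bott. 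You instead decompose both spaces of sections into irreducibles, observe that each is multiplicity-free with constituents $S_{(d-i,i)}V$ indexed by nested initial segments, and conclude by Schur's lemma once the diagonal scalars are known to be nonzero. What your route buys: it bypasses both the self-adjointness reduction and Boralevi's theorem (your $H^1$ computation and the ``coincidence'' are in fact only motivational and never used), it needs only the $H^0$ half of Bott plus the splitting results of Section 4, and it computes the rank of $\tau$ exactly, namely $\sum_{i\le \min(r(k,d),r(h,d))}\dim S_{(d-i,i)}V$. What the paper's route buys: an explicit differential-operator realization of $\tau$ and the identification of its kernel with $H^0Q_{k,h}(d-h)$, tying the statement to the bundles $Q_{k,h}$ of Theorem \ref{thm:main1}. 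The one step you should make precise is ``tracking this identity through the filtration shows the scalar is nonzero'': the clean formulation is that if the (unique) copy of $S_{(d-i,i)}V$ in $H^0\P^k\OO(d)$ were killed by the map to $H^0\P^i\OO(d)$, then by left-exactness of $H^0$ it would embed into $H^0$ of $\ker\bigl(\P^k\OO(d)\to\P^i\OO(d)\bigr)$, whose irreducible constituents are all of the form $S_{(d-j,j)}V$ with $j>i$ by Bott applied to the graded pieces $S^j\Omega(d)$, contradicting multiplicity-freeness; since $\P^k\to\P^i$ factors through $\P^h$, the scalar on the shared constituent is nonzero.
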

Besides the interest in completing the previously known results on p.p. bundles on projective spaces, our motivation comes also from the hope that p.p. bundles of type $\P^k\OO_X(n)$ with $k>1$ and $d\geq 0$ on a smooth projective variety $X$ might be useful in the study of the embedding $X\subset \PP^N$. We recall that the canonical surjective morphism $\P^1\OO_{\PP(V)}(1)|_X=V\otimes \OO_X\to \P^1\OO_X(1)$ gives rise to a map $\PP(\P^1\OO_X(1))\to \PP(V)$ whose image is the tangential variety Tan$(X)$, i.e. the union of the projective tangent spaces of $X$. This observation has been considered in the literature, implicitly or explicitly, for example in the articles \cite{ein}, \cite{bodeoli}, in the study of properties of the embedding of $X$, and in the monograph \cite{gzk}, in the definition of the discriminants of algebraic varieties. In our opinion it is possible that interesting geometrical interpretations and applications might arise also from the higher p.p. bundles $\P^k\OO_X(d)$ with $d\geq 1$, in the near future.

In section \ref{sec:pp} we recall those properties of p.p. sheaves that we will need later and in section \ref{sec:homogeneous} we give a survey of the theory of quiver representations associated with homogeneous vector bundles on projective spaces, giving also proofs to some useful technical results which are not easy to find in the existing literature. We provide the proofs of Theorems \ref{thm:main1} and \ref{thm:main2} in sections \ref{sec:thm1} and \ref{sec:thm2}, respectively.
\subsubsection*{Acknowledgements} I wish to express my gratitude to Alberto Alzati for his hospitality at the Department of Mathematics of the University of Milan and for many nice mathematical discussions during the preparation of this paper. I also thank Giorgio Ottaviani for his useful advise on many occasions, his encouragement and for introducing me to the theory of quiver representations for homogeneous vector bundles. 
\section{Principal parts sheaves.}\label{sec:pp}
Throughout this article we will denote $\PP(V)$ a $n$-dimensional projective space over the complex numbers $\C$, and this will be the only algebraic variety we have to say something new about. Despite that, we find it useful to start in great generality, by recalling the definition given in \cite{groth} of the p.p. sheaves associated to a $\OO_X$-module on an arbitrary scheme $X$ over a base scheme $S$. 
We denote $\mathcal{F}$ a $\OO_X$-module, $X\times_S X$ the fibred product over $S$,  $p_i:X\times X\to X$ for $i=1,2$ the two projections and $\Delta\subset X\times_S X$ the diagonal. We also denote  $\Delta_{k+1}$ the multiple structure on the diagonal $\Delta\subset X\times_S X$ defined by the $(k+1)$-th power $\mathcal{I}_\Delta^{k+1}$ of the ideal sheaf $\mathcal{I}_\Delta\subset\OO_{X\times_S X}$.  Then $\OO_{\Delta_{k+1}}$ has two different structures as a sheaf of $\OO_X$-algebras, coming from the maps $p_i^\ast:\OO_X\to {p_i}_\ast\OO_{\Delta_{k+1}}$ for $i=1,2$, which we will refer to as the {\em first} and the {\em second} $O_X$-algebra structure of $\OO_{\Delta_{k+1}}$, respectively. We recall that the principal parts sheaves of  $\mathcal{F}$ are defined in the following way:
 $$\P^k(\mathcal{F})={p_1}_\ast(p_2^\ast\mathcal{F}|_{\Delta_{k+1}}).$$
 This construction defines a covariant functor of the category of $\OO_X$-modules to itself.
 With a slight abuse of notation one can say that $\P^k(\mathcal{F})$ is identified with the sheaf $\OO_{\Delta_{k+1}}\otimes_2\mathcal{F}$, with the structure of $\OO_X$-module arising from the first $\OO_X$-structure on the factor $\OO_{\Delta_{k+1}}$. 
 
When $X$ is a smooth algebraic variety one can think at $\P^k(\mathcal{F})$ as the sheaf with stalks given by all the Taylor polynomials up to the $k$-th order of sections of $\mathcal{F}$ defined at $x_0\in X$, with respect to a given a set of parameters for $x_1,\ldots,x_n$ centered at $x_0$. In general there exists a canonical sheaf map (but not a map of $\OO_X$-modules) 
\begin{equation}\label{eq:d} d_k: \mathcal{F}\to \P^k(\mathcal{F}),\end{equation} mapping a local section defined at a point $x$ to its $k$-th Taylor polynomial at $x$. This map is defined by $f\mapsto (1\otimes_{\OO_S} f)\otimes_{\OO_{X\times_{S}X}}1\in\P^k(\mathcal{F})$, for any $f\in\mathcal{F}$.  As a consequence, the image of $d_k$ generates $\P^k(\mathcal{F})$ as a $\OO_X$-module. For example, assume $X$ smooth with local parameters $x_1,\ldots,x_n$ as above and $\mathcal{F}$ a locally free sheaf with local basis $e_1,\ldots, e_r$. Then a local section $\sigma(x)=\sum f(x)_ie_i$ of $\mathcal{F}$ produces \begin{equation}\label{eq:dk}d_k(\sigma)=\sum f_i(x+h)e_i \mbox{ mod }(h)^{k+1}\end{equation}
In particular one sees that a generating set for the fiber $\P^k\mathcal{F}(x_0)$ is given by
\begin{equation}\label{eq:basePk} \{h_1^{j_1}\cdots h_n^{j_n}e_i,\ :\ i=1,\ldots,n,\ j_1+\cdots+j_n\leq k\}\end{equation}

 When $\mathcal{F}$ is a locally free sheaf, there exist canonical exact sequences of $\OO_X$-modules:
 \begin{equation}\label{eq:canonicseqpp} 0\to S^k\Omega_X^1\otimes\mathcal{F}\to \P^k(\mathcal{F})\to \P^{k-1}(\mathcal{F})\to 0,\end{equation}
 arising from the exact sequences $$0\to \mathcal{I}_{\Delta}^k/\mathcal{I}_{\Delta}^{k+1}\to \OO_{\Delta_{k+1}}\to\OO_{\Delta_k}\to 0.$$ 
 
 The restriction $p_2^\ast\mathcal{F}|_{\Delta_{k+1}}\to p_2^\ast\mathcal{F}|_{\Delta}\cong\mathcal{F}$ induces the surjection $\P^k\mathcal{F}\to \mathcal{F}$. Composing the map $d_k: \mathcal{F}\to \P^k(\mathcal{F})$ with the surjection above, one finds that the linear map \begin{equation}\label{eq:surjsec}H^0\P^k(\mathcal{F})\to H^0(\mathcal{F})\end{equation} is surjective.  
We conclude this short introduction to p.p. sheaves by mentioning their universal property with respect to differential operators.
We recall that a differential operator of order $0$ between two $\OO_X$-modules $D:\mathcal{F}\to \mathcal{G}$ is a $\OO_X$-homomorphism and that differential operators of order $\leq k$ are defined inductively as those sheaf maps $D:\mathcal{F}\to \mathcal{G}$ such that for any local section $a\in \OO_X$ the operator $D_a:\mathcal{F}\to\mathcal{G}$ defined by  $D_a(f)=D(af)-aD(f)$, with $f\in\mathcal{F}$ a local section,  is a differential operator of order $\leq k-1$. Then the following property holds.
\begin{prop}[cfr. \cite{groth} Def. 16.8.1 and Prop. 16.8.8]\label{prop:univpp} $D:\mathcal{F}\to\mathcal{G}$ is a differential operator of order $\leq k$ iff there exists a morphism of $\OO_X$-modules $\theta:\P^k(\mathcal{F})\to \mathcal{G}$ such that the following diagram is commutative
$$\bfig
\btriangle[{\mathcal{F}}`{\P^k(\mathcal{F})}`{\mathcal{G}};{d_k}`{D}`{\theta}]
\efig .$$
Moreover, the morphism $\theta$ is uniquely determined by $D$.
\end{prop}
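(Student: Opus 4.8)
\emph{Plan of proof.} Both statements are local on $X$, so I would fix compatible affine opens and write $X=\operatorname{Spec}B$, $S=\operatorname{Spec}R$, $\mathcal{F}=\widetilde{M}$, $\mathcal{G}=\widetilde{N}$, and $P^k:=(B\otimes_R B)/\mathcal{I}^{k+1}$, where $\mathcal{I}=\ker(B\otimes_R B\to B)$, so that $\P^k(\mathcal{F})$ is the $B$-module $P^k\otimes_2 M$ formed via the second structure, with the first structure providing its $\OO_X$-module structure, and $d_k(m)=\overline{1\otimes 1}\otimes m$. The assignment $\theta\mapsto\theta\circ d_k$ sends $\operatorname{Hom}_{\OO_X}(\P^k(\mathcal{F}),\mathcal{G})$ to sheaf maps $\mathcal{F}\to\mathcal{G}$, and I want to prove it is a bijection onto the differential operators of order $\leq k$. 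Uniqueness of $\theta$ is immediate: since the image of $d_k$ generates $\P^k(\mathcal{F})$ as an $\OO_X$-module, any $\OO_X$-linear $\theta$ is determined by the values $\theta(d_k(f))=D(f)$. The two implications remain.

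For the direction ``$\theta$ exists $\Rightarrow D$ has order $\leq k$'' I would first record the elementary composition lemma: if $\phi:\mathcal{F}\to\mathcal{H}$ is a differential operator of order $\leq k$ and $\psi:\mathcal{H}\to\mathcal{G}$ is $\OO_X$-linear, then $\psi\circ\phi$ has order $\leq k$, since $(\psi\circ\phi)_a=\psi\circ\phi_a$ by $\OO_X$-linearity of $\psi$ and one induces on $k$. Granting this, it suffices to check that $d_k$ itself has order $\leq k$, for then $D=\theta\circ d_k$ does too. Writing $\delta a:=\overline{1\otimes a-a\otimes 1}\in\mathcal{I}/\mathcal{I}^{k+1}$ and using $d_k(af)=\overline{1\otimes a}\otimes f$ together with $a\,d_k(f)=\overline{a\otimes 1}\otimes f$, one computes
\[(d_k)_a(f)=\overline{(1\otimes a-a\otimes 1)}\otimes f=(\delta a)\cdot\big(\overline{1\otimes 1}\otimes f\big)=\big(m_{\delta a}\circ d_k\big)(f),\]
where $m_{\delta a}$ denotes multiplication by $\delta a$ in $P^k$, an $\OO_X$-linear endomorphism of $\P^k(\mathcal{F})$. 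Because $\delta a\in\mathcal{I}/\mathcal{I}^{k+1}$ annihilates the kernel $\mathcal{I}^k/\mathcal{I}^{k+1}\otimes M=S^k\Omega^1_X\otimes\mathcal{F}$ of the truncation $\pi:\P^k(\mathcal{F})\to\P^{k-1}(\mathcal{F})$ appearing in (\ref{eq:canonicseqpp}), it factors as $m_{\delta a}=\bar m\circ\pi$ for an $\OO_X$-linear $\bar m$, whence $(d_k)_a=\bar m\circ\pi\circ d_k=\bar m\circ d_{k-1}$. By induction $d_{k-1}$ has order $\leq k-1$, so the composition lemma shows $(d_k)_a$ has order $\leq k-1$, i.e. $d_k$ has order $\leq k$; the base case $d_0=\mathrm{id}$ is trivial.

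The substance of the statement is the reverse implication, the construction of $\theta$ from a given $D$ of order $\leq k$, and this is where I expect the real work to lie. Using that the $d_k(f)$ generate $\P^k(\mathcal{F})$, I would \emph{define} $\theta$ on generators by $\theta(d_k(f)):=D(f)$ and extend $\OO_X$-linearly. The crux is \textbf{well-definedness}: one must show that every relation $\sum_i b_i\,d_k(f_i)=0$ in $\P^k(\mathcal{F})$ is mapped to $\sum_i b_i\,D(f_i)=0$ in $\mathcal{G}$. Tracing the prescription through the previous paragraph shows that $\delta a$ acts on the generators exactly as the operator $f\mapsto D_a(f)$ acts under $\theta$; consequently the defining relation $\delta a_0\cdots\delta a_k=0$ in $P^k$ (a product of $k+1$ elements of $\mathcal{I}$ lies in $\mathcal{I}^{k+1}=0$) translates precisely into the vanishing of the iterated commutator $(\cdots((D_{a_0})_{a_1})\cdots)_{a_k}(f)$, which holds because $D$ has order $\leq k$. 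Establishing that these order-$(k+1)$ relations, together with the Leibniz relations $d_k(af)=a\,d_k(f)+(\delta a)\,d_k(f)$, generate the full kernel of the free presentation of $\P^k(\mathcal{F})$ is the main obstacle; once it is settled the resulting $\theta$ is $\OO_X$-linear by construction and satisfies $\theta\circ d_k=D$. As a cleaner alternative I would instead induct on $k$: applying $\operatorname{Hom}_{\OO_X}(-,\mathcal{G})$ to (\ref{eq:canonicseqpp}) and comparing with the principal-symbol sequence $0\to\operatorname{Diff}^{\leq k-1}\to\operatorname{Diff}^{\leq k}\xrightarrow{\ \sigma\ }\operatorname{Hom}_{\OO_X}(S^k\Omega^1_X\otimes\mathcal{F},\mathcal{G})$ produces a commutative ladder whose outer vertical maps are isomorphisms by the inductive hypothesis and by the identification of the symbol term with the kernel of $\pi$; a diagram chase then yields the case $k$, the base case $k=0$ being trivial since $\P^0(\mathcal{F})=\mathcal{F}$, $d_0=\mathrm{id}$, and order-$0$ operators are exactly the $\OO_X$-linear maps.
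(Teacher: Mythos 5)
The paper offers no proof of this proposition at all --- it is quoted directly from EGA IV (D\'ef. 16.8.1 and Prop. 16.8.8) --- so your attempt can only be judged on its own terms. Your uniqueness argument and your proof of the direction ``$\theta$ exists $\Rightarrow$ $D$ has order $\leq k$'' (composition lemma plus the computation $(d_k)_a=m_{\delta a}\circ d_k$ and the factorization through $d_{k-1}$) are correct and complete. The converse, which is the substantive half, is not closed. In your first approach you define $\theta$ on the generators $d_k(f)$ and you yourself flag as ``the main obstacle'' the identification of all relations among these generators; that obstacle is real and is left open. Your ``cleaner alternative'' does not repair it: first, it relies on the sequence (\ref{eq:canonicseqpp}), which the paper states only for locally free $\mathcal{F}$, while the proposition is asserted for arbitrary $\OO_X$-modules; second, and more seriously, the surjectivity step of your diagram chase needs the symbol $\sigma_k(D)$ of a given $D$ of order $\leq k$ to lie in the image of the restriction map $\mathrm{Hom}_{\OO_X}(\P^k(\mathcal{F}),\mathcal{G})\to \mathrm{Hom}_{\OO_X}(S^k\Omega^1_X\otimes\mathcal{F},\mathcal{G})$. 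Your top row is only left exact, so this lifting is not formal, and producing it is essentially the original problem over again.

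The gap disappears if you exploit the local model you already set up, rather than a presentation by the generators $d_k(f)$. Locally $\P^k(\mathcal{F})=P^k\otimes_2 M=(B\otimes_R M)/\mathcal{I}^{k+1}\!\cdot\!(B\otimes_R M)$, where $\mathcal{I}=\ker(B\otimes_R B\to B)$ acts through the $(B\otimes_R B)$-module structure of $B\otimes_R M$. Define $\tilde\theta: B\otimes_R M\to N$ by $\tilde\theta(b\otimes m)=b\,D(m)$; this is well defined with no work (the formula is $R$-balanced because $D$ is $\OO_S$-linear, which is part of Grothendieck's definition and is needed for the relative statement anyway) and is $B$-linear for the first structure. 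Since $\mathcal{I}$ is generated by the elements $\delta a=1\otimes a-a\otimes 1$ and $B\otimes_R M$ is generated over $B\otimes_R B$ by the elements $1\otimes m$, it suffices to check that $\tilde\theta\bigl(\prod_{i=0}^{k}\delta a_i\cdot(1\otimes m)\bigr)=0$. Expanding the product gives $\sum_{S\subseteq\{0,\ldots,k\}}(-1)^{|S|}\bigl(\prod_{i\in S}a_i\bigr)\otimes\bigl(\prod_{i\notin S}a_i\bigr)m$, whose image under $\tilde\theta$ is $\sum_{S}(-1)^{|S|}\bigl(\prod_{i\in S}a_i\bigr)D\bigl(\bigl(\prod_{i\notin S}a_i\bigr)m\bigr)$; an easy induction identifies this alternating sum with the iterated commutator $(\cdots(D_{a_0})_{a_1}\cdots)_{a_k}(m)$, which vanishes precisely because $D$ has order $\leq k$. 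Hence $\tilde\theta$ descends to the desired $\OO_X$-linear $\theta$ with $\theta\circ d_k=D$: the well-definedness you were worried about \emph{is} the vanishing of the iterated commutators, and no further relations need to be identified.
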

\section{Homogeneous vector bundles on projective spaces.}\label{sec:homogeneous}
In this article we will deal with the case $X=\PP(V)$, with $V\cong\C^{n+1}$ and $\mathcal{F}=\OO_{\PP(V)}(d)$ a line bundle, which we will denote simply by $\OO(d)$. Recall that $\PP(V)$ is the homogeneous space $\PP(V)=G/P$ with $G=SL(n+1)$ and $P$ the parabolic subgroup consisting of matrices of the form
\begin{equation}\label{eq:formP}p=\left(\begin{array}{ll} \lambda&\underline{0}\\ \underline{v} & A\end{array}\right)\end{equation} with $\underline{v}\in\C^n$, $A\in Gl(n)$ and $\lambda=\det(A)^{-1}$, acting on $V$ on the left and fixing the set of hyperplanes which contain $x=(1:0:\cdots:0)\in\PP^n$. 
Since $\OO(d)$ is a homogeneous line bundle, by functoriality it follows that $\P^k\OO(d)$ are homogeneous for any $k$ and $d$. Moreover the following exact sequences, deduced from (\ref{eq:canonicseqpp}), are built with $G$-invariant homomorphisms
\begin{equation}\label{eq:exactk} 0\to S^k\Omega(d)\to \P^k\OO(d)\to \P^{k-1}\OO(d)\to 0.\end{equation}
In the study of the structure of $\P^k\OO(d)$ as a homogeneous vector bundle we will need some related notions of representation theory, so we will now enter into this topic in some detail.

\subsection*{$P$-modules and vector bundles.}
Let $X=G/P$ be a rational homogeneous variety, with $G$ semisimple and $P$ parabolic, and let $x\in X$ be the point associated with the lateral $P\in G/P$. A $G$-homogeneous vector bundle $E$ on $X$ with fiber $W=E(x)$ over $x$ is associated with a representation $\rho: P\to GL(W)$ in the following way: $E$ is the quotient of the product $G\times W$ by the relation $(g,v)\sim (g',v')$ iff there exists $p\in P$ such that $g'=gp$ and $v'=\rho(p^{-1})(v)$. One denotes then $E=E_{\rho}$.  As an immediate application of the correspondence between $P$-modules and homogeneous vector bundles one sees for example that the line bundle $\OO(1)$ is given by $\rho(p)=\lambda\in Gl(1)$, for any $p\in P$ of the form (\ref{eq:formP}).

The Lie algebra $\mathcal{P}$ of $P$ is the direct sum $\mathcal{P}=Lie R\oplus Lie N=\mathcal{R}\oplus\mathcal{N}_-$ where $R$ is reductive and $N$ is the unipotent part. Note that in the case $G=Sl(n+1)$ and $P$ as above, $N$ is normal in $P$ and $P$ is the semidirect product $P=N\rtimes R$. Moreover $R$ is an extension $$1\to Sl(n)\to R\to \C^\ast\to 1.$$ We denote $\mathcal{G}=\mathcal{H}\oplus\bigoplus \mathcal{G}_\alpha$ the Cartan decomposition of the Lie algebra $\mathcal{G}$ of $G$, with $\mathcal{H}$ the Cartan subalgebra of diagonal matrices and $\alpha$ varying in the set of roots of the form $\alpha_{ij}$, for $i,j=1,\ldots n+1$ and $i\not=j$. We recall that the corresponding root spaces $\mathcal{G}_{\alpha_{ij}}$ are generated by the matrices $E_{ij}$ with $1$ in the $(i,j)$ entry and $0$ elsewhere. We denote $\alpha_1=\alpha_{1,2},\ldots,\alpha_{n}=\alpha_{n,n+1}$ the usual choice of positive simple roots for $\mathcal{G}$. Following the notations of \cite{fuha}, we denote the linear forms on $\mathcal{H}$ corresponding to the diagonal entries by  $L_1,\ldots,L_{n+1}$. They satisfy the relation $L_1+\cdots+L_{n+1}=0$. We also denote $\lambda_1=L_1$, $ \lambda_2=L_1+L_2,\, \ldots,\, \lambda_n=L_1+\cdots+L_n$ the fundamental weights. Since $H$ is also a Cartan subalgebra for $\mathcal{P}$, one can  associate to $\mathcal{P}$ the same weight space as to $\mathcal{G}$, namely the sublattice of $\mathcal{H}^\ast$ generated by $\lambda_1,\ldots,\lambda_n$, cfr. \cite{ot}.

 Note the decomposition $\mathcal{N}_-=\mathcal{G}_{\alpha_{2,1}}\oplus\ldots\oplus\mathcal{G}_{\alpha_{n+1,1}}$ and that $\mathcal{N}_-$ is the set of matrices 
$$\left(\begin{array}{ll}0& \underline{0}\\ \underline{v}& \underline{0}\end{array}\right),$$ with $\underline{v}\in\C^n$.
We consider also the decomposition $\mathcal{G}=\mathcal{P}\oplus \mathcal{N}_+$ with $\mathcal{N}_+=\mathcal{G}_{\alpha_{1,2}}\oplus\ldots\oplus\mathcal{G}_{\alpha_{1,n+1}}$, the set of matrices of the form $$\left(\begin{array}{ll}0& \underline{w}^t\\ \underline{0}& \underline{0}\end{array}\right)$$ with $\underline{w}\in\C^n$. Observing that the tangent space at $P\in G/P$ can be identified with $\mathcal{N}_+\cong \C^n$, one can check that the tangent bundle $T_{\PP^n}$, defined in general by the adjoint representation $$ad:\P\to Gl(\mathcal{G}/\mathcal{P}),$$ can be described equivalently as the representation $$ad(p)(w)=\lambda\underline{w}^t A^{-1}$$ for $p$ an element in $P$ as in (\ref{eq:formP}). This means that $T_{\PP^n}$ is equal to the bundle associated to the dual of the standard representation of $Sl(n)\subset R$ tensorized with the line bundle $\OO(1)$. 

Similarly, one can check that the adjoint representation $$ad:P\to Gl(\mathcal{N}_-)$$ can be described by 
\begin{equation}\label{eq:omegarepr}ad(p)(v^t)=\lambda^{-1} A\underline{v},\end{equation} so it is isomorphic to the dual of the representation obtained above and associated to $T_{\PP^n}$. In conclusion, the adjoint representation of $P$ in $\mathcal{N}_-$ produces the vector bundle $\Omega_{\PP^n}$ and moreover $\Omega_{\PP^n}$ is equal to the bundle associated to the standard representation of $Sl(n)\subset R$, tensorized with $\OO(-1)$. Note that the maximal weight of the $P$-representation associated to $\Omega$ is $L_2-\lambda_1=\lambda_2-2\lambda_1=-\alpha_1$.

Now we consider arbitrary representations of $P$. One knows that any representation of $P$ is completely reducible if and only if its restriction to $N$ is trivial by a result of Ise in \cite{ise}, see also the notes \cite{ot}. This happens, for example, for the representations associated to $T_{\PP^n}$ and $\Omega_{\PP^n}$ calculated above. Moreover, since $\mathcal{R}\cong \C\oplus sl(n)$, any representation of $P$ restricted to $R$ becomes completely reducible. Conversely, any representation $\sigma$ of $R$ extends to a representation of $\rho$ of $P$ by letting $N$ act trivially. If $\sigma$ is irreducible then by a result of Ramanan in \cite{ram}, the associated vector bundle $E_{\rho}$ is stable and it is called a {\em irreducible} homogeneous vector bundle. For example one can obtain in this way the stability of $\Omega_{\PP^n}$, as well as the stability of all the bundles $S_{\mu}\Omega_{\PP^n}(k)$ with $S_{\mu}$ the Schur functor defined by an arbitrary partition $\mu=(p_1\geq p_2\geq\cdots\geq p_n)$ and arbitrary $k\in \Z$, since we know that $\Omega_{\PP^n}$ is a shift of the bundle associated to the standard representation of $Sl(n)\subset R$. Note that $\mathcal{H}$ is a Cartan subalgebra also for $\mathcal{R}$, so $\Z(\lambda_1,\ldots,\lambda_n)$ is the weight space for $\mathcal{R}$. Any irreducible representation of $R$ is the tensor product of a irreducible representation of $Sl(n)\subset R$ and a irreducible representation of $\C^\ast$. The representations of $R$ induced from $Sl(n)$ can be described by a maximal weight vector $\lambda=a_2\lambda_2+\cdots+a_n\lambda_n$ with $a_i\geq 0$ for any $i=2,\ldots, n$. On the other hand the representations of $R$ produced by its quotient $\C^\ast$ are described by a maximal weight vector of the form $k\lambda_1$ with arbitrary $k\in\Z$. So any irreducible representation of $R$ is described by a maximal weight vector of the form $k\lambda_1+\sum_{i=2}^n a_i\lambda_i$ with $a_i\geq 0$ for $i=2,\ldots, n$, and the set of such vectors is the Weil chamber of $R$. 

\subsection*{Graded vector bundles.}
If $E$ is a homogeneous vector bundle, then, as it was reminded above, it arises from some representation $\rho:P\to Gl(W)$ and one may consider the restriction $\sigma$ of this representation to $R$. Then one knows that $\sigma=\oplus \sigma_i$ a direct sum of irreducible representations. Extending $\sigma$ restriction back to $P$ by letting $N$ act trivially, one gets another vector bundle $gr(E)=\oplus E_i$ which is a direct sum of stable vector bundles, by Ramanan's result cited above. This is the graded vector bundle associated to $E$. We summarize the description of the associated graded bundle, as resulting from all the preceding discussion, in the following proposition.

\begin{prop}\label{prop:graded} Let $E$ is a homogeneous vector bundle and $W$ its associated $P$-module. Then the associated graded bundle $gr(E)$ is a direct sum of stable vector bundles $E_i$ of the form $$E_i=S_{\mu_i}\Omega\otimes\OO(d_i),$$ with $\mu_i=(m_2^{(i)}\geq\cdots\geq m_n^{(i)})$ a partition and $d_i$ an integer. This decomposition is obtained by considering $W$ as a $R$-module and decomposing it into a direct sum of irreducible $R$-modules $W_i$ of weights $(d_i-\sum_{i=2}^n m_i)L_1+m_2^{(i)}L_2+\cdots m_n^{(i)}L_n$. \end{prop}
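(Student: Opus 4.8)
The plan is to reduce the proposition to the two structural results recalled above --- Ise's complete reducibility criterion and Ramanan's stability theorem --- supplemented by a highest-weight computation identifying each irreducible $R$-summand with a Schur-functor twist of $\Omega$. First I would restrict the $P$-module $W=E(x)$ to the reductive part $R$. Because $\mathcal{R}\cong\C\oplus sl(n)$ is reductive, $W|_R$ is completely reducible, so $W|_R=\bigoplus_i W_i$ with each $W_i$ an irreducible $R$-module. Extending each $W_i$ to a $P$-module by letting the unipotent part $N$ act trivially produces homogeneous bundles $E_i$, and by the very definition of the associated graded bundle one has $gr(E)=\bigoplus_i E_i$. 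Since each $W_i$ is irreducible over $R$ with $N$ acting trivially, Ramanan's theorem applies and shows that every $E_i$ is stable; this already gives that $gr(E)$ is a direct sum of stable homogeneous bundles.

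It remains to identify each $E_i$ with a bundle $S_{\mu_i}\Omega\otimes\OO(d_i)$. Here I would use that an irreducible $R$-module is determined by its highest weight, which lies in the Weyl chamber of $R$ and hence equals $k\lambda_1+\sum_{j=2}^n a_j\lambda_j$ with $a_j\geq 0$. The $Sl(n)$-part $\sum_{j=2}^n a_j\lambda_j$ is exactly the highest weight of the irreducible $Sl(n)$-representation $S_\mu(U)$ obtained by applying the Schur functor to the standard representation $U$, where $\mu=(m_2\geq\cdots\geq m_n)$ is recovered from the Dynkin labels by $m_j=a_j+a_{j+1}+\cdots+a_n$. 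Since $\Omega$ is, as established above, the bundle associated to the standard representation of $Sl(n)\subset R$ twisted by $\OO(-1)$, the bundle $S_\mu\Omega=S_\mu(U)\otimes\OO(-|\mu|)$ carries the same $Sl(n)$-representation together with a determinant twist $\OO(-|\mu|)$, with $|\mu|=\sum_{j=2}^n m_j$. Matching the remaining $\C^\ast$-weight, namely the coefficient of $L_1$, then forces a unique integer twist by $\OO(d_i)$, and computing the highest weight of $S_{\mu_i}\Omega\otimes\OO(d_i)$ returns precisely $(d_i-\sum_{j=2}^n m_j)L_1+\sum_{j=2}^n m_j L_j$, as claimed.

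The one delicate point, which I expect to be a matter of bookkeeping rather than a conceptual obstacle, is keeping the two gradings separated: the $\C^\ast$-weight carried by $\lambda_1=L_1$ (the $\OO(1)$-direction) and the $Sl(n)$-weight carried by $\lambda_2,\ldots,\lambda_n$. One must use the relation $L_1+\cdots+L_{n+1}=0$ consistently --- in particular the fact that $L_1$ restricts to zero on the maximal torus of $Sl(n)\subset R$ --- so that the determinant shift hidden inside $S_\mu\Omega$, coming from the highest weight $-\alpha_1=L_2-L_1$ of $\Omega$, is accounted for exactly once and leaves precisely one free integer $d_i$. Once this dictionary between highest weights in the $\lambda_j$-basis and pairs $(\mu_i,d_i)$ is fixed, the weight formula in the statement follows immediately.
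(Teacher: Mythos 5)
Your proposal is correct and follows essentially the same route as the paper: the proposition is stated there as a summary of the preceding discussion (restriction of $W$ to $R$, complete reducibility of $R$-modules, Ramanan's stability theorem for irreducible $R$-modules extended trivially over $N$, and the identification of $\Omega$ with the standard $Sl(n)$-representation twisted by $\OO(-1)$), which is exactly the chain of reductions you assemble. Your highest-weight bookkeeping, giving $(d_i-\sum_j m_j)L_1+\sum_j m_j L_j$ for $S_{\mu_i}\Omega\otimes\OO(d_i)$, agrees with the weight formula in the statement.
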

As an application of the theory just recalled, and for later use in this article, we compute the graded bundles associated to any Schur functor $S_\mu (V\otimes\OO)$. Let us write $V=\C e_1\oplus U$, with $U\cong \C^n$ the $P$-invariant subspace generated by $e_2,\ldots,e_{n+1}$ and whose associated homogeneous bundle we know to be $\Omega(1)$. One has the following result, certainly well known to the experts, but for which we provide a proof for convenience of the reader.
\begin{prop}\label{prop:gradedschur} The graded homogeneous vector bundle associated to $S_\mu V\otimes \OO$ is the following
\begin{equation}\label{eq:gradschur}gr(S_\mu V\otimes \OO)=\bigoplus_\nu S_\nu [\Omega(1)]\otimes\OO(|\mu|-|\nu|)=\bigoplus_\nu S_\nu \Omega\otimes\OO(|\mu|),\end{equation}
the sum extended over the set of all $\nu\subset\mu$ with $\mu\setminus\nu$ having no two boxes in the same column (cfr. \cite{fuha}, exercise 6.12). \end{prop}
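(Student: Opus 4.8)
The plan is to reduce the computation of $gr(S_\mu V\otimes\OO)$ to a branching problem for the reductive group $R$, as prescribed by Proposition \ref{prop:graded}. The graded bundle attached to $S_\mu V\otimes\OO$ is, by definition, the one associated to the underlying $P$-module $S_\mu V$ after restriction to $R$ and decomposition into $R$-irreducibles. Since the Schur functor $S_\mu$ is built functorially from $V^{\otimes|\mu|}$ by Young symmetrizers, it is $Gl(V)$-equivariant and hence commutes with restriction to the subgroup $R$: one has $(S_\mu V)|_R\cong S_\mu(V|_R)$ as $R$-modules. Thus the whole computation reduces to decomposing $V$ as an $R$-module and then applying $S_\mu$.

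First I would record the $R$-module structure of $V$. Writing $V=\C e_1\oplus U$ as in the statement, both summands are $R$-invariant, since $R$ consists of the block-diagonal matrices $\mathrm{diag}(\lambda,A)$ sitting inside $P$. The line $\C e_1$ has weight $L_1=\lambda_1$ and is therefore associated to $\OO(1)$, while $U=\langle e_2,\dots,e_{n+1}\rangle$ is the summand already identified in the text as $\Omega(1)$. In particular $gr(V\otimes\OO)=\OO(1)\oplus\Omega(1)$, the split form of the (twisted) Euler sequence.

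Next, since $\C e_1$ is one-dimensional, the decomposition of $S_\mu(U\oplus\C e_1)$ into irreducible $R$-modules is governed by Pieri's rule: only single-row partitions contribute from the line factor, and the Littlewood--Richardson coefficient $c^\mu_{\nu,(m)}$ equals $1$ exactly when $\mu\setminus\nu$ is a horizontal strip (no two boxes in the same column), with $m=|\mu|-|\nu|$. This is the content of \cite{fuha}, exercise 6.12, and gives
\[ S_\mu(U\oplus\C e_1)\cong\bigoplus_\nu S_\nu U\otimes(\C e_1)^{\otimes(|\mu|-|\nu|)}, \]
the sum over $\nu\subset\mu$ with $\mu\setminus\nu$ having no two boxes in a single column. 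Translating back through the dictionary $U\leftrightarrow\Omega(1)$, $\C e_1\leftrightarrow\OO(1)$ yields the first asserted equality, the summand $S_\nu U\otimes(\C e_1)^{\otimes(|\mu|-|\nu|)}$ corresponding to $S_\nu[\Omega(1)]\otimes\OO(|\mu|-|\nu|)$. For the second equality I would invoke the elementary identity $S_\nu(\mathcal{E}\otimes\mathcal{L})\cong S_\nu\mathcal{E}\otimes\mathcal{L}^{\otimes|\nu|}$ for a line bundle $\mathcal{L}$: applied to $\Omega(1)=\Omega\otimes\OO(1)$ it gives $S_\nu[\Omega(1)]=S_\nu\Omega\otimes\OO(|\nu|)$, and then $\OO(|\nu|)\otimes\OO(|\mu|-|\nu|)=\OO(|\mu|)$.

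The one point that really requires care is the opening reduction: one must check that forming the associated graded bundle commutes with the Schur functor, i.e. that $gr(S_\mu V\otimes\OO)$ is attached to the $R$-module $S_\mu(V|_R)$ and nothing larger. This is where the naturality of $S_\mu$ and the complete reducibility of $R$-modules are used, and it is also where one must keep the $\OO(1)$-twists aligned so that the weights of the irreducible summands match the normalization in Proposition \ref{prop:graded} (each box of $\nu$ contributing one unit of $L_1$, as in the identification $\Omega(1)\leftrightarrow U$). Once this bookkeeping is in place, the statement is a direct application of Pieri's rule.
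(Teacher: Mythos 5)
Your proof is correct, but it follows a genuinely different route from the paper's. The paper argues by induction on $|\mu|$: the base case is the Euler sequence, the restriction formula of \cite{fuha} Exercise 6.12 identifies which partitions $\nu$ occur as $Sl(n)$-constituents (but not their line-bundle twists $d_\nu$), and the twists are then pinned down by composing Pieri (Olver) inclusions $S_\mu V\to S_{\mu'}V\otimes V$ with evaluation to get maps $S_\mu V\otimes\OO\to S_{\mu'}V\otimes\OO(1)$ and invoking the inductive hypothesis. You instead bypass the induction entirely by decomposing $V$ as a module over the full reductive group $R$ rather than only over $Sl(n)$: since $\C e_1$ and $U$ are both $R$-stable and carry the weights of $\OO(1)$ and $\Omega(1)$ respectively, the direct-sum formula for Schur functors, $S_\mu(A\oplus B)\cong\bigoplus c^\mu_{\nu\lambda}\,S_\nu A\otimes S_\lambda B$, together with Pieri's rule for the one-dimensional factor $\C e_1$, delivers the constituents \emph{and} their twists in one step. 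This is shorter and arguably cleaner as a proof of the proposition itself; what the paper's longer route buys is the explicit family of invariant maps $S_\mu V\otimes\OO\to S_{\mu'}V\otimes\OO(1)$ and the exact sequence (\ref{eq:beginresomega}), which are reused immediately afterwards in the proof of Proposition \ref{prop:schurquiver} to show that the corresponding arrows of the quiver representation are nonzero --- information that your purely representation-theoretic decomposition does not provide. The only point you should make fully explicit is the reduction you yourself flag: that $gr$ of the bundle attached to a $P$-module $W$ is the bundle attached to $W|_R$, which is exactly Proposition \ref{prop:graded}, so no gap remains.
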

\begin{proof} The first case is $V\otimes\OO$ itself, whose associated representation $\rho: P\to Gl(V)$ is the restriction of the standard one. Recalling the formula (\ref{eq:omegarepr}) for the $R$-representation associated to $\Omega$ and restricting $\rho$ to $R$, one finds the decomposition
\begin{equation}\label{eq:grstandard}gr(V\otimes\OO)=\Omega(1)\oplus\OO(1).\end{equation}  Note that
one could have used as well the well known Euler sequence \begin{equation}\label{eq:euler}0\to\Omega(1)\to V\otimes\OO\to\OO(1)\to 0,\end{equation} observing that it is $G$-invariant and gives a filtration of $V\otimes\OO$ with irreducible quotients. To find the stratification of $S_\mu V\otimes\OO$ we proceed by induction on the lenght $|\mu|$. Formula (\ref{eq:gradschur}) has already been proved in the base case $|\mu|=1$. Assuming $|\mu|>1$, we compute the restriction of the $G$-representation $S_\mu V$ to $Sl(n)\subset P$ using the formula in \cite{fuha}, Exercise 6.12, from which we obtain  \begin{equation}\label{eq:resschur}\mbox{Res}_{Sl(n)}S_\mu V=\bigoplus S_{\nu}U\end{equation} with the sum extended to all those partitions $\nu\subset\mu$ such that $\mu\setminus\nu$ has no two boxes in the same column. This decomposition already says that $gr(S_\mu V\otimes\OO)$ is the direct sums of irreducible bundles of the form $S_\nu \Omega(d_\nu)$ with $\nu$ varying as above. To get the correct shifts $d_\nu$ we find it useful to consider the following construction. For any $\mu'$  with $|\mu\setminus\mu'|=1$
consider the Pieri inclusion $S_\mu V\to S_{\mu'}V\otimes V$, also called a Olver map, see \cite{samwey} for an explicit formula, and the $G$-invariant composition of sheaf maps \begin{equation}\label{eq:olvermap} S_\mu V\otimes\OO\to S_{\mu'} V\otimes V\otimes\OO\to S_{\mu'}V\otimes\OO(1),\end{equation} with the last map equal to the evaluation map. We consider also the direct product of all these sheaf morphisms $$S_\mu V\otimes\OO\stackrel{f}\to \bigoplus_{|\mu\setminus\mu'|=1}S_{\mu'}V\otimes\OO(1).$$  From the explicit formula for the Olver map and the decomposition (\ref{eq:resschur}) applied to $S_\mu V$ and to all the $S_{\mu'}V$ above, it follows that the kernel of $\theta$ is induced from the representation $S_\mu U$ of $Sl(n)\subset P$. So one gets the exact sequence
\begin{equation}\label{eq:beginresomega} 0\to S_\mu[\Omega(1)]\to S_\mu V\otimes\OO\stackrel{f}\to\bigoplus_{|\mu\setminus\mu'|=1}S_{\mu'}V\otimes\OO(1).\end{equation}
By the inductive hypothesis we know that $gr(S_{\mu'}V\otimes\OO(1))=\bigoplus S_{\nu'} \Omega\otimes \OO(|\mu'|+1)=\bigoplus S_{\nu'} \Omega\otimes \OO(|\mu|)$, with $\mu'\setminus\nu'$ having no two boxes in the same column. Taking into account the decomposition (\ref{eq:resschur}) that we already know, we get that $gr(S_\mu V\otimes\OO)$ is the direct sum $S_\mu[\Omega(1)]\oplus\bigoplus S_{\nu'} \Omega\otimes \OO(|\mu|)$, this last sum extended over all $\nu'\subset\mu'$ as above, with varying $\mu'$. This implies formula (\ref{eq:gradschur}) for the given $\mu$. \end{proof}
\subsection*{Quiver representations.}
We will now briefly recall the theory which establishes an equivalence between the category of homogeneous vector bundles on $\PP(V)$ and invariant morphisms, with some category of representations with relations of a given quiver, associated to $\PP(V)$. In the literature this theory is actually formulated for homogeneous vector bundles on various classes of rational homogeneous variety $X=G/P$, starting with the article \cite{boka}, but we limit our exposition only to the case $X=\PP(V)$. We start mentioning the following fact, a proof of which can be found in \cite{otru} Theorem 3.1.
Regarding $\mathcal{N}_-$ as a $\mathcal{R}$-module by the adjoint action, it is known that the $\mathcal{P}$-module structure on $W$ associated to the originary bundle $E$ is equivalent to the datum of the $\mathcal{R}$-module structure on $W$, associated to $gr(E)$, plus a $\mathcal{R}$-equivariant map $\theta:\mathcal{N}_-\otimes W\to W$ satisfying the property $0=\theta\wedge\theta:\bigwedge^2 \mathcal{N}_-\otimes W\to W$.  This is the same as a $G$-equivariant morphism $\theta: \Omega\otimes gr(E)\to gr(E)$ satisfying $0=\theta\wedge\theta:\bigwedge^2\Omega\otimes gr(E)\to gr(E)$. The morphism $\theta$ is zero if and only if $E=gr(E)$, that is $E$ is a completely reducible vector bundle. 

\vskip2mm
Now let us fix a maximal weight vector $\eta_i$ for the $R$-representation associated to any irreducible bundle $E_i$ and let us denote $\xi_1,\ldots,\xi_n$ the weights of $\Omega$.
Writing $gr(E)=\bigoplus V_i\otimes E_i$, with $E_i$ the distinct irreducible bundles in the decomposition of $gr(E)$ and $V_i$ vector spaces accounting for the multiplicity of the appearance of $E_i$ in $gr(E)$, one decomposes a $R$-morphism $\theta$ as above into $R$-invariant morphisms $\theta_{ij}:V_i\otimes \Omega\otimes E_i\to V_j\otimes E_j$. A key observation is the following (cfr. \cite{otru} Theorem 4.3, Definition 5.5 and Corollary 5.4):
\begin{equation}\label{ex:ext1irred}\dim Hom^G(\Omega\otimes E_i,E_j)=\left\{\begin{array}{lll}1 & \mbox{ if there exists $k$ s.t. }\eta_j=\eta_i+\xi_k \\0& \mbox{ otherwise }\end{array}\right.\end{equation}
Moreover one has the isomorphism
$$Hom^G(\Omega\otimes E_i,E_j)\cong Ext^1(E_i,E_j)^G,$$ see \cite{otru}, Theorem 4.3.
In our context of $Sl(n)$ representations, we know that $E_i$ and $E_j$ have the form $S_\mu\Omega(h)$ and $S_{\mu'}\Omega(k)$, respectively, and one can rephrase the observation above saying that $Hom^G(\Omega\otimes E_i,E_j)=\C$ if and only if $$Hom^G(\Omega(1)\otimes S_\mu[\Omega(1)](h-|\mu|),S_{\mu'}[\Omega(1)](k-|\mu'|+1))=\C.$$ Passing to the associated $R$-representations, and denoting with $L$ the $R$-representation associated to $\OO(1)$, we get that the above equality holds if and only if $$Hom^R(U\otimes S_\mu U\otimes L^{\otimes (h-|\mu|)}, S_{\mu'} U\otimes L^{\otimes(k-|\mu'|+1)})=\C.$$ Now one can apply Pieri formula and conclude that the above holds if and only if $|\mu'\setminus\mu|=1$ and $h=k$ and $Hom^G(\Omega\otimes E_i,E_j)=0$ otherwise. In conclusion we have 
\begin{equation}\label{eq:arrows} Hom^G(\Omega\otimes E_i,E_j)\not=0 \iff\left\{
\begin{array}{ll}E_i=S_\mu\Omega(h)\\ 
E_j=S_{\mu'}\Omega(h)\\ \mu\subset\mu' \mbox{ and }|\mu'\setminus\mu|=1\end{array}\right. \end{equation}  One then fixes a $G$-invariant morphism $m_{ij}\in Hom^G(\Omega\otimes E_i,E_j)\cong\C$ in such way that $m_{ij}(\xi_k\otimes\eta_i)=\eta_j$ whenever $\eta_j=\eta_i+\xi_k$.

\noindent
Going back to the  case of a general $P$-structure on $gr(E)$, we get that any $R$-invariant morphism $\theta\in Hom^G(\Omega\otimes gr(E),gr(E))$ can be written as $\theta=\bigoplus \theta_{ij}=\bigoplus (f_{ij} \otimes m_{ij})$, with $f_{ij}:V_i\to V_j$ linear maps of vector spaces. 
\subsection*{The quiver $\mathcal{Q}_{\PP^n}$.}
One defines the quiver $\mathcal{Q}_{\PP^n}$ associated to $\PP^n$ as the oriented graph with vertices $[E_i]$, indexed by the set of irreducible bundles, and directed edges $[E_i]\to [E_j]$ iff $Hom^G(\Omega\otimes E_i,E_j)=\C$. In a similar way one defines the quivers $\mathcal{Q}_X$ associated to more general rational homogeneous varieties, see \cite{boka}, \cite{otru}.
\begin{obs}\label{obs:quiverord} {\em In \cite{otru} Corollary 5.3 it is shown that for any arrow $[E_i]\to [E_j]$ in $\mathcal{Q}_{\PP^n}$ one has $\mu(E_j)=\mu(E_i)+\mu(\Omega)$, and in particular the quiver $\mathcal{Q}_{\PP^n}$ is {\em levelled}, see Definition 5.1 in \cite{otru}. Moreover we can consider $\mathcal{Q}_{\PP^n}$ as a poset with ordering defined by $[E_i]<[E_j]$ iff $E_i=S_\nu\Omega(h)$ and $E_j=S_\mu\Omega(h)$ with $\nu\subset\mu$, equivalently if there exists a sequence of arrows in $\mathcal{Q}_{\PP^n}$ starting at $[E_i]$ and ending at $[E_j]$. Note that in this ordering any $\nu\not=\eta$ with $|\nu|=|\eta|$ produce incomparable elements. }\end{obs}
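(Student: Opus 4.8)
The first assertion, the equality $\mu(E_j)=\mu(E_i)+\mu(\Omega)$ for every arrow and the resulting levelledness of $\mathcal{Q}_{\PP^n}$, is precisely \cite{otru} Corollary 5.3 and Definition 5.1, so the plan is simply to invoke it. The real content of the observation is the description of $\mathcal{Q}_{\PP^n}$ as a poset together with the equivalence of the two characterizations of the order, and here I would reduce everything to the combinatorics of Young's lattice through the explicit description of the arrows given in (\ref{eq:arrows}).

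The starting point is that (\ref{eq:arrows}) pins down the arrows completely: an arrow $[E_i]\to[E_j]$ exists exactly when $E_i=S_\nu\Omega(h)$ and $E_j=S_\mu\Omega(h)$ carry the \emph{same} twist $h$ and $\mu$ is obtained from $\nu$ by adjoining a single box. I would first record the two immediate consequences: along any directed path the twist $h$ stays constant while $|\mu|$ increases by exactly one at each step. In particular a directed path from $[E_i]$ to $[E_j]$ forces $E_i=S_\nu\Omega(h)$ and $E_j=S_\mu\Omega(h)$ with $\nu\subseteq\mu$, which gives one implication. For the converse I would use the standard fact that the containment order on partitions is the transitive closure of the covering relation ``add one box'': if $\nu\subseteq\mu$ (at the fixed twist $h$) then $\mu\setminus\nu$ can be exhausted box by box through a chain of genuine partitions $\nu=\lambda^{(0)}\subset\lambda^{(1)}\subset\cdots\subset\lambda^{(r)}=\mu$, and by (\ref{eq:arrows}) each cover $\lambda^{(t)}\subset\lambda^{(t+1)}$ is realized by an arrow of $\mathcal{Q}_{\PP^n}$. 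Concatenating these arrows yields the required path, establishing the equivalence of the two descriptions of the order.

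It then remains to check that the relation so defined is a genuine partial order and to verify the incomparability remark. Transitivity is inherited from transitivity of $\subseteq$, equivalently from concatenation of paths. Irreflexivity and the absence of cycles follow at once from the levelledness recalled in the first part: the slope changes by the fixed nonzero amount $\mu(\Omega)$ at each arrow and is therefore strictly monotone along any nontrivial path, so no cycle can exist and the order is well defined. Finally the incomparability statement is purely combinatorial: if $|\nu|=|\eta|$ with $\nu\neq\eta$, then neither $\nu\subseteq\eta$ nor $\eta\subseteq\nu$ can hold, so $S_\nu\Omega(h)$ and $S_\eta\Omega(h)$ are incomparable. The one point deserving genuine care --- the main obstacle, such as it is --- is the bookkeeping of the twist: one must notice that (\ref{eq:arrows}) ties $h$ to be identical at both ends of every arrow, so that comparability is possible only within a fixed twist and reduces cleanly to inclusion of partitions. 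Once this is isolated, the remaining assertions are standard facts about Young's lattice combined with the cited levelledness.
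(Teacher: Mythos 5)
Your proposal is correct and follows essentially the route the paper intends: the paper offers no explicit proof of this observation, but the ordering and its two equivalent descriptions are meant to be read off from the arrow characterization (\ref{eq:arrows}) together with the cited results of \cite{otru}, which is exactly what you do, filling in the standard Young's-lattice fact that containment of partitions is the transitive closure of the add-one-box covering relation. Your attention to the fixed twist $h$ and the acyclicity via levelledness are the right (implicit) ingredients, so nothing is missing.
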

\subsection*{The quiver representation of a homogeneous bundle.}
Given the homogeneous bundle $E$, one constructs a linear representation $[E]$ of the quiver $\mathcal{Q}_{\PP^n}$ in the following way. To any vertex $[E_i]\in \mathcal{Q}_{\PP^n}$ one associates the vector space $V_i$ defined above, and to any arrow $[E_i]\to [E_j]$ the linear map $f_{ij}:V_i\to V_j$ defined above. The condition $\theta\wedge\theta=0$ imposes some relations between all these maps, i.e. some relations between compositions of the $f_{ij}$'s. We do not enter into the details of the construction of relations, we just mention that in \cite{otru} it was proved that for projective spaces,  and more generally for grassmannians, they are generated by the {\em commutativity} relations associated to any square diagram of $f_{ij}$'s.
A morphism between two representations $(\{V_i\},\{f_{ij}\})$ and $(\{W_i\},\{g_{ij}\})$ of  a quiver is in the natural way a set of maps $\psi_i:V_i\to W_i$ such that $\psi_j\circ f_{ij}=g_{ij}\circ \psi_i$ for any $i,j$.
The general result of this theory, stated for projective spaces, is the following (cfr. \cite{otru}, Theorem 5.9)
\begin{thm}\label{thm:quiverequivalence} There is an equivalence of categories between the category of homogeneous vector bundles on $\PP^n$ and invariant morphisms, and the category of representations  with the given relations of the quiver $\mathcal{Q}_{\PP^n}$.\end{thm}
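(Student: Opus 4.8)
The plan is to construct the two functors between the categories and show they are mutually quasi-inverse, leaning on the structure results already recalled. Write $\Phi$ for the functor from homogeneous bundles to quiver representations and $\Psi$ for its candidate inverse. On objects $\Phi$ is the construction described above: given a homogeneous bundle $E$ with fiber $W=E(x)$, I would decompose $gr(E)=\bigoplus_i V_i\otimes E_i$ into isotypic summands using Proposition \ref{prop:graded}, attach the multiplicity space $V_i$ to the vertex $[E_i]$, and encode the extra information in the $\mathcal{P}$-module $W$ beyond its $\mathcal{R}$-module structure by the single $\mathcal{R}$-equivariant map $\theta:\mathcal{N}_-\otimes W\to W$ with $\theta\wedge\theta=0$ provided by Theorem 3.1 of \cite{otru} (recalled above). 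By (\ref{eq:arrows}) together with the fixed normalizations $m_{ij}$, the map $\theta$ splits uniquely as $\theta=\bigoplus(f_{ij}\otimes m_{ij})$, so one reads off the linear maps $f_{ij}:V_i\to V_j$ indexing the arrows $[E_i]\to[E_j]$. This is the representation $\Phi(E)=[E]$.

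Conversely, $\Psi$ should send a representation $(\{V_i\},\{f_{ij}\})$ to the bundle with $gr=\bigoplus_i V_i\otimes E_i$ and $\mathcal{P}$-structure recovered from $\theta:=\bigoplus(f_{ij}\otimes m_{ij})$ via the same structure theorem, provided $\theta\wedge\theta=0$. The crux is thus to match the quadratic condition $\theta\wedge\theta=0$ with the imposed relations on $\mathcal{Q}_{\PP^n}$. I would decompose $\theta\wedge\theta$ according to its source summand $E_i$ and target $E_l$: each component is a $G$-invariant map $\bigwedge^2\Omega\otimes E_i\to E_l$ obtained by antisymmetrizing the two-step composition $\theta\circ(\mathrm{id}\otimes\theta)$. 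Since $\mathcal{Q}_{\PP^n}$ is levelled (Remark \ref{obs:quiverord}), such a component factors through the finitely many length-two paths $[E_i]\to[E_j]\to[E_l]$, and a nonzero target forces these to come in pairs forming a commutative square. The content is then that, after the normalization of the $m_{ij}$, the two Pieri compositions around the square carry equal scalars, so that the vanishing of this component of $\theta\wedge\theta$ is literally the commutativity relation $f_{jl}f_{ij}=f_{j'l}f_{ij'}$.

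This scalar matching — checking that adding the two boxes in either order yields the same normalized double-Pieri map, so that no spurious factors enter and one obtains exactly the square relations and nothing more (using that $Hom^G(\bigwedge^2\Omega\otimes E_i,E_l)$ is at most one-dimensional) — is where I expect the main technical effort. Everything else is a weight and Pieri bookkeeping of the kind already carried out to establish (\ref{eq:arrows}). Once this is settled, $\Psi$ is well defined on objects and is precisely the essential-surjectivity statement for $\Phi$.

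On morphisms, a $G$-invariant map $\phi:E\to E'$ is on fibers a homomorphism of $P$-modules $W\to W'$; its restriction to $R$ preserves the isotypic decomposition and so yields linear maps $\psi_i:V_i\to V_i'$, while commutation with the $\mathcal{N}_-$-action translates, component by component and again through the fixed $m_{ij}$, into the intertwining identities $\psi_j f_{ij}=g_{ij}\psi_i$. This is exactly a morphism of quiver representations, and the assignment is visibly bijective, giving full faithfulness. Finally, that $\Phi\Psi$ and $\Psi\Phi$ are naturally isomorphic to the identities is formal, since both functors are built from the same data $(\{V_i\},\theta)$ through the equivalence of \cite{otru}, Theorem 3.1; the only genuinely delicate point remains the relation-matching of the previous two paragraphs.
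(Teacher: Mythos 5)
The paper does not actually prove this statement: it is quoted verbatim from Ottaviani--Rubei (\cite{otru}, Theorem 5.9), and the surrounding text explicitly declines to enter into the construction of the relations. So there is no internal proof to compare against; what can be judged is whether your outline would stand on its own. It follows the standard strategy of \cite{boka} and \cite{otru} (pass from the $P$-module $W$ to the pair consisting of the $R$-module structure and the map $\theta:\mathcal{N}_-\otimes W\to W$ with $\theta\wedge\theta=0$, then decompose $\theta$ into the maps $f_{ij}\otimes m_{ij}$), and the functorial bookkeeping on objects and morphisms is fine.

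The genuine gap is the step you yourself flag as ``the main technical effort'' and then do not carry out: the identification of the quadratic condition $\theta\wedge\theta=0$ with precisely the commutativity relations. This is the entire content of the theorem beyond the formal equivalence of \cite{otru} Theorem 3.1, and asserting that the scalars match after normalizing the $m_{ij}$ is not a proof. Moreover, one of your supporting claims is inaccurate as stated: it is not true that a nonzero component of $\theta\wedge\theta$ in $Hom^G(\bigwedge^2\Omega\otimes E_i,E_l)$ forces the contributing length-two paths to ``come in pairs forming a commutative square.'' If $E_i=S_\nu\Omega(h)$ and $E_l=S_\mu\Omega(h)$ with $\mu\setminus\nu$ two boxes in the same column, there is exactly one intermediate vertex, yet $S_\mu U$ occurs in $\bigwedge^2 U\otimes S_\nu U$ (and not in $S^2U\otimes S_\nu U$), so the antisymmetrized single composite is nonzero and the condition $\theta\wedge\theta=0$ imposes the vanishing of that composite --- a degenerate relation that your pairing argument misses. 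Handling this case, together with the multiplicity-one statement for $Hom^G(\bigwedge^2\Omega\otimes E_i,E_l)$ and the scalar computation for genuine squares, is exactly what the cited reference supplies and what your proposal leaves open.
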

\begin{definition}\label{def:support} Given a homogeneous vector bundle $E$, the {\em support} of $E$ is the subset of vertices of $\mathcal{Q}_{\PP^n}$ defined by supp$(E)=\{[E_i]\ |\ V_i\not=0\}$. We also denote  with $\mathcal{Q}_{|E}$ the full subquiver of $\mathcal{Q}_{\PP^n}$ with set of vertices supp$(E)$ and $\mathcal{Q}(E)$ the subquiver of $\mathcal{Q}$, with the same vertices as $\mathcal{Q}_{|E}$ and arrows $[E_i]\to [E_j]$ iff $f_{ij}\not=0$ . \end{definition}
It will be useful to explain the effect of a non-zero map $f_{ij}$ in the representation $[E]$ associated to a homogeneous bundle $E$. We do this in the simple case $gr(E)=E_i\oplus E_j$ and $\theta:\Omega\otimes gr(E)\to gr(E)$ corresponding to the choice of a non-zero element in $Hom^G(\Omega\otimes E_i, E_j)$ as discussed above. Then the representation associated to $E$ has $\mathcal{Q}(E)$ equal to the subgraph $[E_i]\to [E_j]$ of $\mathcal{Q}_{\PP^n}$ and it has $[E_j]$ as sub-representation and $[E_i]$ as quotient. Consequently, $E$ is in a $G$-invariant extension $$0\to E_j\to E\to E_i\to 0,$$ which is not split since $E$ is not completely reducible, because $\theta\not=0$. So the given choice of a non-zero element in $Hom^G(\Omega\otimes E_i, E_j)$ is equivalent to the choice of a non-zero element in $Ext^1(E_i,E_j)^G$, which is then isomorphic to $Hom^G(\Omega\otimes E_i, E_j)\cong\C$.
\subsection*{Slopes of quiver representations and stability}
Given any reducible bundle $gr(E)=\bigoplus V_\lambda\otimes E_\lambda$,  let us denote $\alpha_\lambda=\dim V_\lambda$ and $\alpha=(\alpha_\lambda)$ the dimension vector. Let us denote $K_0(R-mod)$ the Grothendieck group of $R$-representations. This is the free $\Z$-module generated by the isomorphism classes of the irreducible bundles $E_\lambda$, the isomorphism being defined by $[gr(E)]\mapsto \sum\alpha_\lambda E_\lambda$. Fixing a graded $F=gr(E)$ and following \cite{king} and \cite{otru} section 7, we introduce the natural homomorphism $\mu_F:K_0(R-mod)\to \Z$, associated to $F$ and defined by \begin{equation}\label{eq:muF}\mu_F(E_\lambda)=c_1(F)rk(E_\lambda)-rk(F)c_1(E_\lambda).\end{equation} Note that $\mu_F(E)=0$.
We introduce the following definition, taken from  \cite{king}, Definition 1.1.
\begin{definition}\label{def:stability} The quiver representation $[E]$ associated to $E$ is said to be $\mu$-semistable if for any subrepresentation $[E']$ one has  $\mu_F(gr(E'))\geq 0$. It is called $\mu$-stable if, in addition,  the only subrepresentations $[E']$ for which $\mu_F(gr(E'))=0$ are $[E]$ and $[0]$. \end{definition}
The result that connects $\mu$-(semi)stability of quiver representations and (semi)stability of homogeneous bundle is the following, whose formulation we extract from \cite{otru}, theorems 7.1 and 7.2 and the references therein.
\begin{thm}[cfr. \cite{otru}, theorems 7.1 and 7.2]\label{thm:rohfai} A homogeneous vector bundle $E$ is semistable if and only if its associated quiver representation $[E]$ is $\mu$-semistable. Moreover $[E]$ is $\mu$-stable if and only if $E=W\otimes F$ with $F$ stable homogeneous and $W$ a irreducible $G$-vector space.\end{thm}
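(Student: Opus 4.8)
The plan is to compare the usual slope (semi)stability of the homogeneous bundle $E$ with King's numerical condition on the quiver representation $[E]$ of Definition \ref{def:stability}, using the equivalence of categories of Theorem \ref{thm:quiverequivalence} to pass between $G$-invariant subsheaves of $E$ and subrepresentations of $[E]$. The whole argument rests on two points: that for a homogeneous bundle it suffices to test (semi)stability against $G$-invariant subsheaves, and that the slope inequality for such a subsheaf is, after a short computation, exactly the sign condition on $\mu_F$ entering the definition.

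First I would prove the reduction to invariant subsheaves. If $E$ were not semistable, its maximal destabilizing subsheaf $E_{\max}$ is unique and canonically attached to $E$; since $E$ is $G$-homogeneous with $G$ connected, $g\cdot E_{\max}=E_{\max}$ for all $g\in G$, so $E_{\max}$, and then its saturation, is $G$-invariant and violates the slope inequality. Hence $E$ is semistable if and only if $\mu(E')\le\mu(E)$ for every $G$-invariant saturated subsheaf $E'$. By Theorem \ref{thm:quiverequivalence} such subsheaves correspond, up to codimension two, precisely to the subrepresentations $[E']$ of $[E]$; here one uses that a $G$-invariant subsheaf and its saturation have the same rank and first Chern class, so nothing is lost for the numerical comparison.

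Next comes the numerical translation, which is routine. For a subrepresentation $[E']$ with dimension vector $(\alpha'_\lambda)$ one has $rk(gr(E'))=rk(E')$ and $c_1(gr(E'))=c_1(E')$, since passing to the associated graded changes neither rank nor first Chern class. Extending $\mu_F$ linearly to $K_0(R\text{-mod})$ and using $F=gr(E)$, so that $rk(F)=rk(E)$ and $c_1(F)=c_1(E)$, one gets $\mu_F(gr(E'))=c_1(E)\,rk(E')-rk(E)\,c_1(E')$. Dividing by $rk(E)\,rk(E')>0$ shows that $\mu_F(gr(E'))\ge 0$ is equivalent to $\mu(E')\le\mu(E)$. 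Combined with the previous paragraph this yields the first assertion: $E$ is semistable if and only if $[E]$ is $\mu$-semistable.

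Finally, for the stability statement, I would identify the $\mu$-stable representations with the simple objects of the abelian category of $\mu$-semistable representations of slope zero, as in King \cite{king}; via Theorem \ref{thm:quiverequivalence} and the semistability part already proved, these are the simple objects among semistable homogeneous bundles of slope $\mu(E)$, that is the bundles with $End^G(E)=\C$. The main obstacle is to pin these down. Since $E$ is semistable, its socle is a canonical, hence $G$-invariant, subsheaf of slope $\mu(E)$, so $\mu$-stability forces the socle to be all of $E$ and $E$ to be polystable; its $G$-invariant isotypic decomposition then collapses, again by $\mu$-stability, to a single type, giving $E\cong M\otimes F$ with $F$ a stable bundle satisfying $g^\ast F\cong F$ for every $g\in G$, hence homogeneous, and with $M$ carrying an induced $G$-action. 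Schur's lemma for the stable homogeneous $F$ yields $End^G(E)=End_G(M)$, and $End^G(E)=\C$ forces $M=W$ to be an irreducible $G$-representation, so $E=W\otimes F$. Conversely, for such $E$ every $G$-invariant saturated subsheaf of slope $\mu(E)$ is of the form $W'\otimes F$ with $W'\subseteq W$ a $G$-subrepresentation, and irreducibility of $W$ leaves only $0$ and $E$; by the dictionary of the second paragraph this is exactly $\mu$-stability of $[E]$. This is the delicate point: a subspace $W'\subset W$ that is not $G$-invariant still gives a subsheaf $W'\otimes F\cong F^{\oplus\dim W'}$ of the same slope, but it is not $G$-invariant and hence corresponds to no subrepresentation of $[E]$, so it does not obstruct stability — it is precisely the restriction to $G$-invariant subobjects built into the equivalence of categories that makes $W\otimes F$ stable.
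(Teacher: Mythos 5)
The paper does not actually prove this theorem: it is quoted from Ottaviani--Rubei (theorems 7.1 and 7.2 of \cite{otru}, which in turn rest on work of Rohmfeld, Faini and King), so there is no internal proof to compare yours against. Taken on its own terms, your outline is the standard argument and is essentially correct: the reduction of (semi)stability to $G$-invariant subsheaves via the canonicity of the maximal destabilizing subsheaf (resp.\ of the socle), the observation that saturation and passage to the associated graded change neither rank nor $c_1$, so that $\mu_F(gr(E'))\geq 0$ is literally $\mu(E')\leq\mu(E)$, and the identification of $\mu$-stable objects with the simple objects of the slope-$\mu(E)$ semistable category, leading to $E\cong W\otimes F$. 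Two places are asserted where a careful write-up needs more. First, the dictionary between $G$-invariant subsheaves and subrepresentations is exact, not ``up to codimension two'': the non-locally-free locus of a $G$-invariant saturated subsheaf is a proper closed invariant subset of the homogeneous space $G/P$, hence empty, so such a subsheaf is automatically a homogeneous subbundle and Theorem \ref{thm:quiverequivalence} applies directly. Second, in the polystable step you pass from ``$g^{\ast}F\cong F$ for all $g$'' to ``$F$ homogeneous''; since in this paper homogeneous means $G$-linearized (i.e.\ arising from a $P$-module), you must kill the obstruction to choosing the isomorphisms compatibly, a class in $H^{2}(G,\C^{\ast})$ which vanishes because $SL(n+1)$ is simply connected semisimple and $\mathrm{Aut}(F)=\C^{\ast}$ by stability of $F$. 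Similarly, the claim that every slope-$\mu(E)$ subsheaf of $W\otimes F$ has the form $W'\otimes F$ deserves the one-line stability argument (any stable subsheaf of that slope maps isomorphically to a factor $F$). With these standard points supplied, your argument is complete and recovers the cited result.
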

In the next result we complete the study of the $P$-module structure of the homogeneous bundles $S_\mu V\otimes \OO$ by describing their associated quiver representation.
\begin{prop}\label{prop:schurquiver} Let $\mathcal{E}=S_\mu V\otimes \OO$ with $\mu$ any partition. Then $supp([\mathcal{E}])$ is the set of all $[E_{\nu}]=[S_\nu\Omega(|\mu|)]$ such that $\nu\subset\mu$ and   $\mu\setminus\nu$ has no two boxes in the same column. We have $\dim V_\nu=1$ for any such $\nu$. Moreover $f_{\nu,\eta}\not=0$ for any $\nu,\eta$ as above and $|\eta\setminus \nu|=1$. In particular $\mathcal{Q}(\mathcal{E})=\mathcal{Q}_{|\mathcal{E}}$ and it is a connected graph.
\end{prop}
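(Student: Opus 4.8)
The assertions on $supp([\mathcal{E}])$ and on $\dim V_\nu$ are immediate from Proposition \ref{prop:gradedschur}. Indeed \eqref{eq:gradschur} identifies $gr(S_\mu V\otimes\OO)$ with $\bigoplus_\nu S_\nu\Omega(|\mu|)$, the sum ranging exactly over the $\nu\subset\mu$ for which $\mu\setminus\nu$ has no two boxes in the same column, while \eqref{eq:resschur} shows that each such $S_\nu\Omega(|\mu|)$ occurs with multiplicity one; hence every $V_\nu$ is one-dimensional. By \eqref{eq:arrows} the arrows of the full subquiver $\mathcal{Q}_{|\mathcal{E}}$ are precisely the pairs $[E_\nu]\to[E_\eta]$ with $\nu,\eta$ admissible and $|\eta\setminus\nu|=1$. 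The real content is therefore to prove that each corresponding $f_{\nu\eta}$ is nonzero, and then that the resulting graph is connected.

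First I would translate the problem into a statement about the Lie algebra action. Since $S_\mu V$ is an $Sl(n+1)$-module, its $P$-module structure is the restriction of the $Sl(n+1)$-action, and under the correspondence between $P$-module structures and pairs $(gr,\theta)$ (cf. \cite{otru}, Theorem 3.1) the map $\theta:\mathcal{N}_-\otimes S_\mu V\to S_\mu V$ is nothing but the action of the lowering operators $E_{2,1},\dots,E_{n+1,1}$ spanning $\mathcal{N}_-$. Writing $\theta=\bigoplus (f_{\nu\eta}\otimes m_{\nu\eta})$ as before and using $\dim V_\nu=1$, each $f_{\nu\eta}$ is a scalar, and $f_{\nu\eta}\neq 0$ if and only if the component $\theta_{\nu\eta}$ of the $\mathcal{N}_-$-action, from the $S_\nu U$-isotypic component of $S_\mu V$ (as an $Sl(n)$-module) to the $S_\eta U$-isotypic component, is nonzero. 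Because $U\otimes S_\nu U=\bigoplus_{\eta=\nu+\square}S_\eta U$ is multiplicity-free (Pieri), the space of admissible $\mathcal{R}$-maps is one-dimensional, so it suffices to test $\theta_{\nu\eta}$ on a single, conveniently chosen weight vector.

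The nonvanishing is the heart of the matter. I would describe the isotypic components through semistandard tableaux: in any semistandard filling of $\mu$ the entries equal to $1$ fill an initial segment of the first row, so the span of the weight vectors with exactly $m$ ones is the skew module $S_{\mu/(m)}U$, whose $Sl(n)$-decomposition $\bigoplus_{|\mu\setminus\nu|=m}S_\nu U$ is exactly \eqref{eq:resschur} (with $m=|\mu|-|\nu|$). The operators $E_{i,1}$ replace a $1$ by some $e_i$, $i\ge 2$, and assemble into the $Sl(n)$-equivariant map $U\otimes S_{\mu/(m)}U\to S_{\mu/(m-1)}U$ (the form taken by $\theta$ on these summands); its $(\nu,\eta)$-component is the Pieri map $U\otimes S_\nu U\to S_\eta U$, with $|\eta|=|\nu|+1$. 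To show this component is nonzero for every admissible cover $\nu\subset\eta$ I would evaluate on a highest-weight vector of the $S_\nu U$-summand and track the coefficient of a highest-weight vector of $S_\eta U$ in its image, choosing the box $\eta\setminus\nu$ to be an extremal (bottom-most, then right-most) addable corner so that the straightening needed to re-expand the image is as mild as possible. The main obstacle I anticipate is exactly this bookkeeping: ensuring that for every admissible cover the straightening relations do not cancel the leading term. A clean alternative would be to run an induction on $|\mu|$ using the $G$-equivariant Olver maps \eqref{eq:olvermap} of Proposition \ref{prop:gradedschur} to propagate the nonvanishing from $S_{\mu'}V$ to $S_\mu V$.

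Granting the nonvanishing, the equality $\mathcal{Q}(\mathcal{E})=\mathcal{Q}_{|\mathcal{E}}$ is immediate, since by definition they have the same vertices and now the same (all nonzero) arrows. For connectedness it suffices to join every vertex $[E_\nu]$ to the maximal vertex $[E_\mu]$: the top-most and then left-most box of the horizontal strip $\mu\setminus\nu$ is an addable corner of $\nu$, and adjoining it yields a partition $\nu^+$ with $\nu\subset\nu^+\subseteq\mu$ and $\mu\setminus\nu^+$ again a horizontal strip, so $\nu^+$ is admissible and $[E_\nu]\to[E_{\nu^+}]$ is an arrow of $\mathcal{Q}(\mathcal{E})$; iterating reaches $[E_\mu]$. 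Hence $\mathcal{Q}(\mathcal{E})$ is connected. One may also deduce connectedness directly from the indecomposability of the $P$-module $S_\mu V$: the simple lowering operators $E_{2,1},E_{3,2},\dots,E_{n+1,n}$ all lie in $\mathcal{P}$, so the highest-weight vector generates $S_\mu V$ over $\mathcal{P}$, whence $S_\mu V\otimes\OO$ is an indecomposable homogeneous bundle and its quiver representation cannot split into disconnected pieces.
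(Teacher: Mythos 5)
The parts of your argument that work are the easy ones: the support and the one\hyphenation{}-dimensionality of the $V_\nu$ do follow from Proposition \ref{prop:gradedschur} together with \eqref{eq:resschur} and \eqref{eq:arrows}, and your path $\nu\subset\nu^+\subset\cdots\subset\mu$ through addable corners (or the indecomposability of $S_\mu V$ as a $\mathcal{P}$-module, which is fine because its highest weight space is one-dimensional and generates) does give connectedness \emph{once all arrows are known to be nonzero}. But the central claim of the proposition is precisely that $f_{\nu\eta}\neq 0$ for \emph{every} admissible cover $\nu\subset\eta$, and on this point your proposal stops at a sketch: you reduce to showing that each Pieri component $U\otimes S_\nu U\to S_\eta U$ of the $\mathcal{N}_-$-action is nonzero, propose to test it on a highest weight vector of the $S_\nu U$-isotypic component of $S_{\mu/(m)}U$, and then yourself flag that the straightening/cancellation bookkeeping is the obstacle you have not overcome. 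That bookkeeping is genuinely nontrivial (highest weight vectors of constituents of skew modules are not single tableaux in general), so as written the heart of the proof is missing. Connectedness or indecomposability cannot substitute for it, since a connected quiver representation can perfectly well have some individual $f_{\nu\eta}=0$.

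The paper avoids this computation entirely. It proves Lemma \ref{lemma:quivertensor}, which says that a nonzero arrow $e_{ij}\neq0$ in a tensor factor $E$ forces a nonzero arrow into any irreducible summand $G\subset E_j\otimes F_k$ of $E\otimes F$ from some summand of $E_i\otimes F_k$; it then inducts on $|\mu|$ using the Pieri \emph{surjection} $S_\nu V\otimes V\otimes\OO\to S_\mu V\otimes\OO$ (with $|\mu\setminus\nu|=1$), applying the lemma with $E=V\otimes\OO$ (whose unique arrow comes from the Euler sequence) to produce the new arrow $[S_\nu\Omega(|\mu|)]\to[S_\mu\Omega(|\mu|)]$, and uses the fact that nonzero arrows pass to quotient representations when both endpoints survive, letting $\nu$ vary to collect all arrows. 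Your second, undeveloped suggestion (induction via the maps \eqref{eq:olvermap}) points in this direction, but without an analogue of Lemma \ref{lemma:quivertensor} the induction does not close. To complete your proof you would need either to carry out the weight-vector computation for an arbitrary admissible cover, or to supply and prove a propagation statement of the above type.
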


For the proof we need the following result.
\begin{lm}\label{lemma:quivertensor} Let $E$ and $F$ be two homogeneous bundles with $gr(E)=\bigoplus V_i\otimes E_i$ and $gr(F)=\bigoplus W_j\otimes F_j$ and quiver representations with maps $e_{ij}:V_i\to V_j$ and $f_{ij}:W_i\to W_j$, respectively. Let us also write $gr(E\otimes F)=\bigoplus V_G\otimes G$. Assume $e_{ij}\not=0$ for some $i,j$ and consider any $R$-irreducible summand $G\subset E_j\otimes F_k$. Then there exists some irreducible summand  $G'\subset E_i\otimes F_k$ such that in the quiver representation associated to $E\otimes F$ the linear map $f_{G'G}:V_{G'}\to V_{G}$ is non-zero. \end{lm}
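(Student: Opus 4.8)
\emph{Plan of proof.} The plan is to compute the quiver representation of $E\otimes F$ directly from those of $E$ and $F$ via the Leibniz rule for the $\mathcal{N}_-$-action on a tensor product, and then to isolate a single uncancellable contribution. First I would record that, as $\mathcal{R}$-modules, $gr(E\otimes F)=gr(E)\otimes gr(F)$ (grading is restriction to $R$, which commutes with $\otimes$), and that the structural map of $E\otimes F$ is
\[
\theta_{E\otimes F}=\theta_E\otimes\mathrm{id}_{gr(F)}+\mathrm{id}_{gr(E)}\otimes\theta_F,
\]
since elements of $\mathcal{N}_-\subset\mathcal{G}$ act on a tensor product of $P$-modules by the Leibniz rule. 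Writing $gr(E)\otimes gr(F)=\bigoplus_{a,b}(V_a\otimes W_b)\otimes(E_a\otimes F_b)$ and decomposing each $E_a\otimes F_b=\bigoplus_H M^H_{ab}\otimes H$ into $\mathcal{R}$-irreducibles, one gets $V_H=\bigoplus_{a,b}V_a\otimes W_b\otimes M^H_{ab}$ for every irreducible $H$, where $M^H_{ab}=Hom^R(H,E_a\otimes F_b)$.

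Next I would read off how $\theta_{E\otimes F}$ sits in this block decomposition. The summand $\theta_E\otimes\mathrm{id}$ leaves the $F$-index untouched, so it can only send the $(a,b)$-block of some $V_{H'}$ to the $(c,b)$-block of some $V_H$ along an arrow $[E_a]\to[E_c]$ of $\mathcal{Q}(E)$; dually, $\mathrm{id}\otimes\theta_F$ leaves the $E$-index untouched and only sends $(a,b)$-blocks to $(a,d)$-blocks along arrows of $\mathcal{Q}(F)$. In particular, the block $(i,k)\to(j,k)$ of a map $f_{G'G}$ receives a contribution \emph{only} from $\theta_E\otimes\mathrm{id}$ along the unique arrow $[E_i]\to[E_j]$ (recall $Hom^G(\Omega\otimes E_i,E_j)=\C$), and never from $\mathrm{id}\otimes\theta_F$, because $i\neq j$. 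This disjointness of the two Leibniz summands is the device that rules out cancellation, which is the only genuine danger in the argument.

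With this bookkeeping in place I would produce the summand $G'$. Since $e_{ij}\neq0$ there is a non-zero $G$-invariant map $m^E_{ij}:\Omega\otimes E_i\to E_j$; as $E_j$ is irreducible, $m^E_{ij}$ is surjective by Schur's lemma, so $\Phi:=m^E_{ij}\otimes\mathrm{id}_{F_k}:\Omega\otimes E_i\otimes F_k\to E_j\otimes F_k$ is surjective as well. Composing with the projection $\pi_G$ onto the chosen copy of $G\subset E_j\otimes F_k$ yields a non-zero map; decomposing the source as $\Omega\otimes E_i\otimes F_k=\bigoplus_{G'}M^{G'}_{ik}\otimes(\Omega\otimes G')$ over the $\mathcal{R}$-irreducibles $G'\subset E_i\otimes F_k$, non-vanishing forces $\pi_G\circ\Phi$ to be non-zero on some summand $\Omega\otimes G'$. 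For that $G'$ the induced map of multiplicity spaces $\phi_{G'G}:M^{G'}_{ik}\to M^G_{jk}$ is non-zero, and in particular $Hom^G(\Omega\otimes G',G)\neq0$, so $[G']\to[G]$ is an arrow.

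Finally I would assemble the conclusion. By the block analysis of the second paragraph, the $(i,k)\to(j,k)$ component of $f_{G'G}$ equals $e_{ij}\otimes\mathrm{id}_{W_k}\otimes\phi_{G'G}$ and is the sole contribution to that block; being a tensor product over $\C$ of the three non-zero maps $e_{ij}$, $\mathrm{id}_{W_k}$, $\phi_{G'G}$, it is non-zero, whence $f_{G'G}\neq0$, as claimed. The hard part is precisely controlling interference between $\theta_E\otimes\mathrm{id}$ and $\mathrm{id}\otimes\theta_F$; once one observes that they act on complementary blocks, the surviving steps are the surjectivity of $m^E_{ij}$ from Schur's lemma and the formal non-vanishing of a triple tensor product.
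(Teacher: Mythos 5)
Your proof is correct, and it rests on the same engine as the paper's: the Leibniz rule $\theta_{E\otimes F}=\theta_E\otimes\mathrm{id}+\mathrm{id}\otimes\theta_F$ together with the observation that the two summands act on complementary blocks, so the $\theta_E$-contribution to the $(i,k)\to(j,k)$ block of $f_{G'G}$ cannot be cancelled (the paper phrases this as the two Leibniz terms landing in linearly disjoint $R$-invariant subspaces). Where you genuinely diverge is in how the non-vanishing of that surviving block is certified. The paper evaluates $\theta_{E\otimes F}$ on an explicit vector $v\otimes e_i\otimes w\otimes f_k$, taking for granted that a maximal weight vector of $G$ can be chosen as a pure tensor $e_j\otimes f_k$ of weight vectors; for an arbitrary irreducible summand of $E_j\otimes F_k$ a highest weight vector is in general a linear combination of pure tensors, so that step needs a small repair (or a reduction to the isotypic component, which is what your argument effectively supplies). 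You instead argue invariantly: $m^E_{ij}$ is surjective onto the irreducible $E_j$ by Schur's lemma, hence $m^E_{ij}\otimes\mathrm{id}_{F_k}$ composed with the projection onto the $G$-isotypic part of $E_j\otimes F_k$ is non-zero, and decomposing the source over the isotypic components of $E_i\otimes F_k$ forces $\phi_{G'G}\neq 0$ for some $G'$; combined with the block analysis this gives $f_{G'G}\neq 0$ as a tensor product of non-zero maps. Your version is basis-free and sidesteps the pure-tensor issue; the paper's version is more concrete and exhibits which weight of $\Omega$ realizes the arrow. Both arrive at the same conclusion, and your bookkeeping of why $\mathrm{id}\otimes\theta_F$ cannot interfere (it preserves the $E$-index, and $i\neq j$) is exactly the point the paper's ``linearly disjoint subspaces'' remark is meant to carry.
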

\begin{proof} Consider $e_i$, $e_j$ some maximal weight vectors with associated weights $\eta_i$, $\eta_j$, associated to $E_i$, $E_j$ respectively and $n$ a weight vector in $\mathcal{N}_{-}$ with associated weight $\xi_h$, such that $\eta_j=\eta_i+\xi_h$. One can also assume that by means the Lie algebra action of $\mathcal{N}_{-}$ on $gr(E)$ one gets $0\not=n\cdot (v\otimes e_i)=e_{ij}(v)\otimes e_j\in V_j\otimes E_j$. Recall that $gr(E\otimes F)=\bigoplus  (M_{ikl}\otimes V_i\otimes W_k)\otimes E_{ikl}$, given a decomposition $E_i\otimes F_k=\bigoplus M_{ikl}\otimes E_{ikl}$ by the Littlewood-Richardson formula. Then the $\mathcal{N}_{-}$-action on $gr(E\otimes F)$ is given by $$n\cdot (v\otimes e_i \otimes w\otimes f_k)=\theta_E(v\otimes e_i)\otimes w\otimes f_k+v\otimes e_i\otimes\theta_F(w\otimes f_k),$$ with $\theta_E:\mathcal{N}_{-}\otimes gr(E)\to gr(E)$ and $\theta_F:\mathcal{N}_{-}\otimes gr(F)\to gr(F)$ the structure maps essociated to $E$ and $F$. 

Now assume that a maximal weight vector for $G$ is given as $e_j\otimes f_k$, with $f_k\in F_k$ some weight vector with associated weight $\psi_k$. The Lie algebra action on the tensor product gives $n\cdot(v\otimes e_i\otimes w\otimes f_k)=e_{ij}(v)\otimes e_j\otimes w\otimes f_k+v\otimes e_i\otimes n\cdot(w\otimes f_k)$.  Note that the summands $e_{ij}(v)\otimes e_j\otimes w\otimes f_k$ and  $e_i\otimes n\cdot (w\otimes f_k)$ belong to linearly disjoint $R$-invariant subspaces of $E\otimes F$ and that $e_j\otimes f_k \not=0$. Note that $e_{ij}(v)\otimes e_j\otimes w\otimes f_k$ belongs to the isotypic component $ M_{ij G'}\otimes V_j\otimes W_k \otimes G\subset E_j\otimes F_k$ and that we can assume $e_{ij}(v)\not=0$ by hypothesis. Moreover $v\otimes e_i\otimes w \otimes f_k$ has a non-zero component in some isotypic component $M_{G'G}\otimes V_i\otimes W_k\otimes G'$ of $V_i\otimes W_k\otimes E_j\otimes F_k$, which completes the proof. \end{proof}

\begin{proof}[Proof of Proposition \ref{prop:schurquiver}] The statements about the support of $[\mathcal{E}]$ and the multiplicity spaces $V_\nu$ have already been proved in Proposition \ref{prop:gradedschur}. We use induction on $|\mu|$ to prove the statements about the arrows. The case $|\mu|=1$ follows from the existence of the Euler sequence (\ref{eq:euler}). Note that $\mathcal{Q}(\mathcal{E})$ is the graph $[\OO(1)]\to[\Omega(1)]$ in this case. 
Now assume the statement true for any partition $\nu$ with $1\leq |\nu|<|\mu|$. Given $\mu$ take a partition $\nu\subset\mu$ such that $|\mu\setminus\nu|=1$. Then there exists a $G$-invariant projection, given by the Pieri decomposition,
$$S_\nu V\otimes V\otimes\OO\to S_\mu V\otimes \OO.$$
Recall that $gr(V\otimes\OO)=\Omega(1)\oplus\OO(1)$ and $gr(S_\nu V\otimes\OO)=\bigoplus S_{\nu'}\Omega(|\nu|)$ with $\nu'$ varying as described in Proposition \ref{prop:gradedschur}. Using the Pieri decomposition relative to $R$, one can see that $$gr(S_\nu V\otimes V\otimes\OO)=\bigoplus \C^2\otimes S_{\nu'}\Omega(|\nu|+1)\oplus\bigoplus_{\eta\supset\nu,\ |\eta\setminus\nu|=1}S_{\eta}\Omega(|\nu|+1).$$ 
Note that the first summand of the formula above is equal to $gr(S_\nu V\otimes\OO(1))$ taken with multiplicity $2$, and that the second summand contains the term $S_{\mu}\Omega(|\mu|)$, appearing also in $gr(\mathcal{E})$. We apply Lemma \ref{lemma:quivertensor} setting $F=S_\nu V\otimes\OO$, $E=V\otimes\OO$, $E_i=\OO(1)$, $E_j=\Omega(1)$, $F_k=S_{\nu}\Omega(|\nu|)$ and $G=S_{\mu}\Omega(|\mu|)\subset E_j\otimes F_k$. The outcome is a non-zero arrow $[G']\to [G]$ in the quiver $\mathcal{Q}(E\otimes F)$, with $G'$ necessarily equal to $S_{\nu}\Omega(|\nu|+1)=S_{\nu}\Omega(|\mu|)$. 

\noindent
Secondly, note that the quiver $\mathcal{Q}(S_{\nu} V\otimes\OO(1))$ is contained in $\mathcal{Q}(S_{\nu} V\otimes V\otimes\OO)$, since the first bundle is a quotient of the latter. Hence by inductive hypothesis any arrow $[E_i(1)]\to[E_j(1)]$ is present in $\mathcal{Q}(S_{\nu} V\otimes V\otimes\OO)$. Now, since $\mathcal{E}$ is a quotient of $S_{\nu} V\otimes V\otimes\OO$, then the same holds for their associated representations. In particular it follows that any non-zero arrow $[E_i]\to[E_j]$ in $\mathcal{Q}(S_{\nu} V\otimes V\otimes\OO)$ is sent to a non-zero arrow in $\mathcal{Q}(\mathcal{E})$, provided that $[E_i]$ and $[E_j]$ belong to $Supp(\mathcal{E})$. Putting together the result obtained above about the arrows in $\mathcal{Q}(S_{\nu} V\otimes V\otimes\OO)$, we see that in  $\mathcal{Q}(\mathcal{E})$ one finds the arrow $[S_{\nu}\Omega(|\mu|)]\to [S_{\mu}\Omega(|\mu|)]$, and any arrow $[E_i]\to[E_j]$ already appearing in $\mathcal{Q}(S_{\nu} V\otimes\OO(1))$ with $[E_i],[E_j]\in Supp(\mathcal{E})$. By letting $\nu$ vary arbitrarily under the conditions $\nu\subset\mu$ and $|\mu\setminus\nu|=1$, and observing that $Supp(\mathcal{E})\setminus \{[S_{\nu}\Omega(|\mu|)]\}$ is covered by the sets $Supp(S_{\nu} V\otimes\OO(1))$, the proposition follows.
\end{proof}
\section{The structure of $\P^k(\OO_{\PP^N}(d))$}\label{sec:thm1}
We assume $n\geq 1$ and $d\in\Z$. As in the preceding sections, we will denote $\OO_{\PP^n}$ with $\OO$. The following result is well known.
\begin{prop}[see \cite{perk} page 37, \cite{som} Proposition 2.2]\label{prop:dgeqk} If either $d\geq k$ or $d<0$ one has $\P^k\OO(d)\cong S^kV\otimes\OO(d-k)$. \end{prop}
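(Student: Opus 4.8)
The plan is to prove the isomorphism through the quiver equivalence of Theorem \ref{thm:quiverequivalence}, after first checking that the two bundles have the same associated graded. Applying the canonical sequences (\ref{eq:exactk}) inductively gives $gr(\P^k\OO(d))=\bigoplus_{i=0}^{k}S^i\Omega(d)$, while Proposition \ref{prop:gradedschur} with $\mu=(k)$ (so that the admissible subpartitions $\nu$ are exactly the single rows $(0),(1),\dots,(k)$) gives $gr(S^kV\otimes\OO(d-k))=\bigoplus_{i=0}^{k}S^i\Omega(d)$ as well. Thus both bundles have the same graded, with each irreducible $S^i\Omega(d)$ occurring with multiplicity one, and by (\ref{eq:arrows}) their common support is the linear chain $[\OO(d)]\to[\Omega(d)]\to\cdots\to[S^k\Omega(d)]$ in $\mathcal{Q}_{\PP^n}$.

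On such an $A_{k+1}$-chain with all multiplicity spaces one-dimensional, a quiver representation is determined up to isomorphism by the set of arrows $f_i\colon[S^i\Omega(d)]\to[S^{i+1}\Omega(d)]$ that are non-zero (a non-zero map between one-dimensional spaces is normalised to the identity by the vertex automorphisms). By Proposition \ref{prop:schurquiver} the representation of $S^kV\otimes\OO(d-k)$ is the full chain, with every $f_i\neq0$. Hence, by Theorem \ref{thm:quiverequivalence}, it suffices to show that for $d\geq k$ or $d<0$ the representation of $\P^k\OO(d)$ is also the full chain, i.e.\ that all its arrows are non-zero.

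For $d\geq k$ I would argue globally. The Taylor map $d_k$ induces $\Psi\colon H^0(\OO(d))\otimes\OO=S^dV\otimes\OO\to\P^k\OO(d)$, and $\Psi$ is surjective precisely because $d\geq k$: in the local description (\ref{eq:dk})--(\ref{eq:basePk}) every $k$-jet at $x_0$ is the truncation of a polynomial of degree $\leq k$, which is the restriction of a degree-$d$ homogeneous polynomial once $d\geq k$. Now $S^dV\otimes\OO$ is a full chain (Proposition \ref{prop:schurquiver}) whose first $k+1$ vertices, all of multiplicity one, are exactly the support of $\P^k\OO(d)$; since $\Psi$ is a surjection of representations inducing isomorphisms on these one-dimensional multiplicity spaces, it carries each non-zero arrow of $S^dV\otimes\OO$ to a non-zero arrow of $\P^k\OO(d)$, by the same mechanism used in the last paragraph of the proof of Proposition \ref{prop:schurquiver}. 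Therefore all arrows of $\P^k\OO(d)$ are non-zero and the isomorphism follows.

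The case $d<0$ has no global sections, so here I would compute the arrows directly. Using the isomorphism $Hom^G(\Omega\otimes E_i,E_j)\cong Ext^1(E_i,E_j)^G$, the arrow $f_i$ is the class of the extension $0\to S^{i+1}\Omega(d)\to\ker(\P^{i+1}\OO(d)\to\P^{i-1}\OO(d))\to S^i\Omega(d)\to 0$ obtained from two consecutive copies of (\ref{eq:exactk}). A first-order computation of the $\mathcal{N}_-$-action on the fiber of $k$-jets at $x_0$, in which the automorphy factor of $\OO(d)$ contributes a scaling by $d$ while the translation part shifts the symmetric degree, shows that this class is a non-zero multiple of $(d-i)$; since $0\le i$ and $d<0$ we have $d-i\neq0$ for every $i$, so again all arrows are non-zero. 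I expect this non-vanishing of the arrows to be the main obstacle: for $d\geq k$ it is delivered cleanly by the surjectivity of $\Psi$, whereas for $d<0$ it rests on the infinitesimal computation of the resonance factor $(d-i)$, which one must check never vanishes in the relevant range — and which, conversely, does vanish at $i=d$, accounting for the splitting that occurs in the complementary range $0\le d<k$ treated in Theorem \ref{thm:main1}.
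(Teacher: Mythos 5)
Your route is genuinely different from the paper's. The paper (following Perkinson) produces the explicit $Sl(n+1)$-invariant morphism $u_k:\P^k\OO(d)\to S^kV\otimes\OO(d-k)$ from the differential operator $\xi^k$ via the universal property of Proposition \ref{prop:univpp}, and then checks by a fibrewise computation in coordinates that $u_k$ is injective, hence an isomorphism of bundles of equal rank (this is Proposition \ref{prop:dgeqkbis}). You never exhibit a map: you match the two bundles through the equivalence of Theorem \ref{thm:quiverequivalence}, observing that both have associated graded $\bigoplus_{i=0}^kS^i\Omega(d)$ with all multiplicities one, that by (\ref{eq:arrows}) the only admissible arrows form an $A_{k+1}$-chain with no relations, and that a multiplicity-free representation of such a chain is determined up to isomorphism by which arrows vanish. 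This is legitimate and in some ways more structural --- it exhibits the splitting in the complementary range $0\le d<k$ as the vanishing of exactly one arrow --- at the cost of invoking the full quiver machinery (Propositions \ref{prop:gradedschur} and \ref{prop:schurquiver}, neither of which depends on the present statement, so there is no circularity) for a fact the paper treats as elementary. Your $d\ge k$ half is complete: the jet-evaluation surjection $S^dV\otimes\OO\to\P^k\OO(d)$ is correctly justified by homogenization, and transferring non-zero arrows along a vertex-wise surjection between multiplicity-one representations is exactly the mechanism used at the end of the paper's proof of Proposition \ref{prop:schurquiver}.

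The one real weakness is the $d<0$ case, where everything reduces to the assertion that the class of the extension $0\to S^{i+1}\Omega(d)\to\ker(\P^{i+1}\OO(d)\to\P^{i-1}\OO(d))\to S^i\Omega(d)\to0$ in $Ext^1(S^i\Omega(d),S^{i+1}\Omega(d))^G\cong\C$ is a non-zero multiple of $d-i$. You describe the shape of the computation but do not perform it, and since this is the entire content of that case it cannot be left as an assertion. The claim is in fact correct, and it is essentially Lemma \ref{lemma:compoperators} in disguise: the identity $\eta(\xi^{i+1}(f))=(i+1)(d-i)\,\xi^{i}(f)$ for $\deg f=d$ is where your resonance factor lives, and its vanishing precisely at $i=d$ is what produces the split arrow in Theorem \ref{thm:splitting}. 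Either carry out the infinitesimal $\mathcal{N}_-$-computation on the jet fibre explicitly (being careful with the paper's convention that $P$ stabilizes the hyperplane $U=\langle e_2,\dots,e_{n+1}\rangle$, so that $\mathcal{N}_-$ corresponds to $\Omega$ and the arrows raise the symmetric degree), or import the lemma; as written, the $d<0$ half of the proof is a plausible plan rather than a proof.
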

The approach of \cite{perk} for proving the result above is based on the existence of the differential operators
$$ \xi^k: \C(x_0,\ldots,x_n)\to S^k V\otimes \C(x_0,\ldots,x_n) $$ defined by
\begin{equation}\label{eq:diffk} \xi^k(f)=\sum_{|I|=k}{k\choose I}x^I\otimes \partial^I(f),\end{equation} with $I=(i_0,\ldots,i_n)$ denoting multi-indexes of degree $|I|=i_0+\cdots+i_n=k$ and ${k\choose I}=k!/(i_0!\cdots i_n!)$. Note that the operator above is the $k$-th power of the Euler operator $\xi=\sum x_i\otimes \partial_i$, with $\partial_i=\partial_{x_i}$, as an element of the commutative ring $\C[x_0,\ldots,x_n]\otimes\C[\partial_0,\ldots,\partial_n]$. We are specially interested in the restriction of $\xi^k$ to the line bundle $\OO(d)$, which one can view as a subsheaf of the constant sheaf with coefficients in $\C(x_0,\ldots,x_n)$, identifying the local sections $f\in \OO(d)$ with those fractions $f\in\C(x_0,\ldots,x_n)$ which are homogeneous with homogeneity degree $d$. In this way one gets an operator $$\xi^k:\OO(d)\to S^k V\otimes \OO(d-k).$$  
Note that $\xi^k$ is $Sl(n+1)$-invariant for any $k\geq 1$. By the universal property of principal parts bundles with respect to differential operators, one gets the $Sl(n+1)$-invariant sheaf morphisms $$u_k:\P^k\OO(d)\to S^kV\otimes\OO(d-k),$$ which are shown to be isomorphisms in the cases stated in Proposition \ref{prop:dgeqk}. On the other hand, if $0\leq d<k$ it is shown in \cite{perk} p. 37 that $u_k$ is not an isomorphism, and more precisely one can prove the following results.
\begin{thm}\label{thm:splitting} Assume $0\leq d<k$, then the following facts hold.

\noindent
{\em 1.} The canonical sequence $$0\to Q_{k,d}\to \P^k\OO(d)\to \P^d\OO(d)\to 0$$ is a split exact sequence, inducing an isomorphism
$$\P^k\OO(d)\cong Q_{k,d}\oplus \P^d\OO(d)=Q_{k,d}\oplus (S^dV\otimes \OO).$$

\noindent
{\em 2.} The morphism $u_k:\P^k\OO(d)\to S^kV\otimes \OO(d-k)$ induces an isomorphism $Q_{k,d}\cong\ker(\eta^{k-d} :S^kV\otimes \OO(d-k)\to S^dV\otimes \OO),$ with $\eta=\sum\partial_i\otimes x_i$ and the power $\eta^k$ defined in the ring $\C[\partial_{x_0},\ldots,\partial_{x_n}]\otimes\C[x_0,\ldots,x_n]$.
\end{thm}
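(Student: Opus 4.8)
The plan is to read the entire statement off the quiver representation of $\P^k\OO(d)$, using the operators $\xi^k$ and $\eta^{k-d}$ only to locate the image of $u_k$ inside $S^kV\otimes\OO(d-k)$. By the canonical sequences (\ref{eq:exactk}) the graded bundle is $gr(\P^k\OO(d))=\bigoplus_{i=0}^k S^i\Omega(d)$, each summand with multiplicity one; hence by (\ref{eq:arrows}) and Observation \ref{obs:quiverord} the support of $[\P^k\OO(d)]$ is the linear chain $[\OO(d)]=[S^0\Omega(d)]\to[S^1\Omega(d)]\to\cdots\to[S^k\Omega(d)]$, with no commutativity relations, so the representation is completely determined by the scalars $a_m\in\C$ attached to the arrows $[S^m\Omega(d)]\to[S^{m+1}\Omega(d)]$ for $0\le m\le k-1$.

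The computational heart, and the step I expect to be the main obstacle, is to determine these $a_m$ from the $\mathcal{N}_-$–action on the fibre of jets at $p=(1:0:\cdots:0)$. Dehomogenising in the chart $x_0=1$ with $t_i=x_i/x_0$, a generator $Y_j=x_j\partial_{x_0}$ of $\mathcal{N}_-$ acts on a local section of $\OO(d)$ as the operator $t_j\,(d-\vartheta)$, where $\vartheta=\sum_i t_i\partial_{t_i}$ is the Euler field; thus on the order-$m$ part of the jet (the summand $S^m\Omega(d)$, where $\vartheta$ acts by $m$) it acts as $(d-m)$ times the canonical symmetric multiplication $\mathcal{N}_-\otimes S^m\mathcal{N}_-\to S^{m+1}\mathcal{N}_-$. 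Since that multiplication is precisely the generator $m_{m,m+1}$ of $Hom^G(\Omega\otimes S^m\Omega(d),S^{m+1}\Omega(d))\cong\C$, I conclude $a_m\neq 0$ if and only if $m\neq d$. The care needed is in the dehomogenisation and in checking that the multiplication map is the quiver generator rather than merely nonzero. (This matches Proposition \ref{prop:dgeqk}: when $d\ge k$ or $d<0$ no factor $d-m$ vanishes, all arrows survive, and the chain is indecomposable.)

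Part 1 is then immediate. Because $a_d=0$, the unique arrow joining $\{0,\dots,d\}$ to $\{d+1,\dots,k\}$ is zero, so $[\P^k\OO(d)]$ splits as the direct sum of its sub-chain on $\{d+1,\dots,k\}$ and its quotient-chain on $\{0,\dots,d\}$; under Theorem \ref{thm:quiverequivalence} this is exactly $\P^k\OO(d)\cong Q_{k,d}\oplus\P^d\OO(d)$, with $\P^d\OO(d)\cong S^dV\otimes\OO$ by Proposition \ref{prop:dgeqk}. Moreover the surviving scalars $a_{d+1},\dots,a_{k-1}$ are all nonzero, so $Q_{k,d}$ is the indecomposable thin representation on $[S^{d+1}\Omega(d)]\to\cdots\to[S^k\Omega(d)]$, which also produces the filtration asserted in Theorem \ref{thm:main1}.

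For Part 2 I would realise this chain as $\ker\eta^{k-d}$ via $u_k$, using two inputs. First, $\eta^{k-d}\circ\xi^k=0$ on $\OO(d)$: the operators $\xi$, $\eta$ and the operator ``function degree minus tensor degree'' form an $sl_2$-triple on $\bigoplus_m S^mV\otimes\C(x)$ for which $\OO(d)$ is a highest weight line of weight $d$ (indeed $\eta$ annihilates it), and the standard highest-weight expansion of $\eta^{k-d}\xi^k$ carries the vanishing factor $(d-k+(k-d))=0$; by Proposition \ref{prop:univpp} this gives $\eta^{k-d}\circ u_k=0$, so $\mathrm{Im}\,u_k\subseteq\ker\eta^{k-d}$. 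Second, $\eta^{k-d}$ is surjective, as one checks on the fibre at $p$ where it becomes $\partial_{x_0}^{k-d}\colon S^kV\to S^dV$; combined with Proposition \ref{prop:schurquiver} (the chain of $S^kV\otimes\OO(d-k)$ has all arrows nonzero, with quotient the chain of $S^dV\otimes\OO$ on $\{0,\dots,d\}$) this identifies $\ker\eta^{k-d}$ with the indecomposable chain on $\{d+1,\dots,k\}$, the same representation as $Q_{k,d}$. Finally, $\mathrm{Im}\,u_k\subseteq\ker\eta^{k-d}$ forces the vertex maps of $u_k$ to vanish on $\{0,\dots,d\}$, so $u_k$ kills the summand $S^dV\otimes\OO$ while $u_k|_{Q_{k,d}}\neq0$ (else $u_k=0$); since $Q_{k,d}$ and $\ker\eta^{k-d}$ are isomorphic indecomposable thin representations, hence bricks with $Hom^G\cong\C$, any nonzero map between them is an isomorphism. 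This yields $Q_{k,d}\cong\ker\eta^{k-d}$ induced by $u_k$, together with $\ker u_k\cong S^dV\otimes\OO$.
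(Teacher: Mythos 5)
Your proposal is correct, and it takes a genuinely different route from the paper's. For part \emph{1} the paper splits the sequence by observing that $H^0\P^k\OO(d)\to H^0\P^d\OO(d)$ is surjective and that $\P^d\OO(d)\cong S^dV\otimes\OO$ is trivial, so a basis of global sections of the quotient lifts; for part \emph{2} it computes $u_k$ explicitly on the fibre at $(0:\cdots:0:1)$, exhibiting enough linearly independent images $u_k(z^Jx_n^d)$ and matching ranks with $\ker\eta^{k-d}$. You instead compute the whole quiver representation of $\P^k\OO(d)$ from the $\mathcal{N}_-$-action on jets, finding arrow scalars $a_m=d-m$; this is a clean and correct calculation (the vanishing of $a_d$ alone gives the splitting via Theorem \ref{thm:quiverequivalence}), and it buys you Proposition \ref{prop:dgeqk}, Proposition \ref{prop:quiverpp} and Corollary \ref{cor:ppquiver} as immediate byproducts, whereas the paper derives the quiver of $Q_{k,d}$ only afterwards from Theorem \ref{thm:splitting}. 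Your part \emph{2} shares with the paper the key identity $\eta\xi^k(f)=k(\deg f-k+1)\xi^{k-1}(f)$ (your $sl_2$-triple derivation is exactly Lemma \ref{lemma:compoperators}), but replaces the leading-term/dimension count by the rigidity of thin $A$-type chain representations, which is a nice structural shortcut. The only point you leave unjustified is the parenthetical ``else $u_k=0$'': you must actually verify $u_k\neq 0$ (equivalently that $\xi^k$ is a nonzero differential operator on $\OO(d)$ when $0\leq d<k$), since polynomial sections of degree $d$ are killed by $\xi^k$; one line suffices, e.g.\ $\xi^k(x_1^k/x_0^{k-d})$ contains the term $x_1^k\otimes k!\,x_0^{d-k}\neq 0$. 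With that line added the argument is complete.
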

A proof of this result has been already been sketched in \cite{perk}, as mentioned above, but we think it useful to provide a new one, substantially based on the same ideas, but using more explicit formulas, one of which we state in the following lemma.
\begin{lm}\label{lemma:compoperators} Under the notations introduced above, the following formula holds, for any $k> 0$ and $f\in\C(x_0,\ldots,x_n)$ homogeneous:
\begin{equation}\label{eq:compoper} \eta(\xi^k(f))=k(\deg(f)-k+1)\xi^{k-1}(f).\end{equation}\end{lm}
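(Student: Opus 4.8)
The plan is to prove the identity by a direct monomial computation that reduces, at the last step, to Euler's identity for homogeneous functions. Throughout I keep in mind that in $\xi^k(f)=\sum_{|I|=k}{k\choose I}x^I\otimes\partial^I(f)$ the first tensor factor $x^I$ is a symmetric-power coefficient while $\partial^I(f)$ is a homogeneous function, and that $\eta=\sum_j\partial_j\otimes x_j$ acts by differentiating the \emph{first} (symmetric-power) factor with $\partial_j$ and multiplying the \emph{second} (function) factor by $x_j$. First I would apply $\eta$ termwise to the multinomial expansion of $\xi^k(f)$, obtaining
$$\eta(\xi^k(f))=\sum_j\sum_{|I|=k}{k\choose I}\,\partial_j(x^I)\otimes x_j\,\partial^I(f).$$
Since $\partial_j(x^I)=i_j\,x^{I-e_j}$ (with $e_j$ the $j$-th standard basis multi-index, and the term vanishing when $i_j=0$), only the multi-indices with $i_j\geq 1$ contribute.

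The second step is the reindexing $J=I-e_j$, so that $|J|=k-1$, $x^{I-e_j}=x^J$, and $\partial^I=\partial_j\partial^J$. The combinatorial heart of the argument is the identity ${k\choose I}\,i_j=k{k-1\choose J}$, which follows at once from ${k\choose I}=\tfrac{k}{j_j+1}{k-1\choose J}$ together with $i_j=j_j+1$. After this substitution the sum collapses to
$$\eta(\xi^k(f))=k\sum_{|J|=k-1}{k-1\choose J}\,x^J\otimes\Big(\sum_j x_j\,\partial_j\,\partial^J(f)\Big).$$
The final step is to observe that $\partial^J(f)$ is homogeneous of degree $\deg(f)-|J|=\deg(f)-k+1$, so Euler's identity gives $\sum_j x_j\,\partial_j\,\partial^J(f)=(\deg(f)-k+1)\,\partial^J(f)$. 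Pulling this scalar out of every summand yields $\eta(\xi^k(f))=k(\deg(f)-k+1)\sum_{|J|=k-1}{k-1\choose J}x^J\otimes\partial^J(f)=k(\deg(f)-k+1)\,\xi^{k-1}(f)$, as claimed.

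I do not expect any serious obstacle: the argument is essentially bookkeeping, and the only points requiring care are tracking which tensor factor each operator hits and correctly discarding the $i_j=0$ terms in the reindexing. As a conceptual cross-check (which I would relegate to a remark rather than the main proof, since the explicit-formula style is what was announced before the lemma), one can avoid the multi-index manipulation entirely by working in the Weyl algebra on $x_j,\partial_{x_j}$ and the symmetric-power variables $y_j,\partial_{y_j}$, writing $\xi=\sum_j y_j\partial_{x_j}$ and $\eta=\sum_j\partial_{y_j}x_j$. A one-line commutator computation then gives $[\eta,\xi]=E_x-E_y$ (the difference of the two Euler degree operators) and $[E_x-E_y,\xi]=-2\xi$, so that $\{\xi,\eta,E_x-E_y\}$ is an $sl_2$-triple. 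Since $\eta(f)=0$ and $(E_x-E_y)(f)=\deg(f)\,f$, the standard expansion $[\eta,\xi^k]=\sum_{m=0}^{k-1}\xi^m(E_x-E_y)\xi^{k-1-m}$, evaluated using the weight $\deg(f)-2(k-1-m)$ of $\xi^{k-1-m}(f)$, reproduces the scalar $\sum_{m=0}^{k-1}(\deg(f)-2(k-1-m))=k(\deg(f)-k+1)$, recovering the same formula.
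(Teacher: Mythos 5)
Your main argument is correct and is essentially identical to the paper's proof: termwise application of $\eta$, the multinomial identity ${k\choose I}i_j=k{k-1\choose I-e_j}$, reindexing by $J=I-e_j$, and Euler's identity applied to $\partial^J(f)$, which is homogeneous of degree $\deg(f)-k+1$. The $sl_2$-triple cross-check is a nice independent confirmation but, as you note, belongs in a remark; the only blemish in the main computation is the notational slip ``$j_j+1$'' for the $j$-th entry of $J$ plus one.
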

\begin{proof} We calculate \begin{eqnarray*}
\eta(\xi^k(f))&=&\left(\sum \partial_j\otimes x_j\right)\left(\sum_I {k\choose I}x^I\otimes\partial_I f\right)\\
&=&\sum_{j,I}{k\choose I}\partial_jx^I\otimes x_j\partial_I(f)\\
&=&\sum_{j,I}{k\choose I} i_j (x^I/x_j)\otimes x_j\partial_I f\\
&=&\sum_{j,I\ :\ I-e_j\geq \underline{0}} k {k-1\choose I-e_j} x^{I-e_j}\otimes x_j\partial_I f\\
&&\mbox{with $e_j$ the $j$-th vector of the canonical basis of $\Z^{n+1}$}\\
&=&k\sum_{j,J}{k-1\choose J} x^J\otimes x_j\partial_{J+e_j} f\\
&&\mbox{with the sum over those $J$ s.t.$|J|=k-1$}\\
&=&k\sum_{J}{k-1\choose J} \deg(\partial_{J} f)x^J\otimes\partial_{J} f\\
&=&k(\deg(f)-k+1)\xi^{k-1}(f).\end{eqnarray*}
\end{proof}
\begin{proof}[Proof of Theorem \ref{thm:splitting}]
 Using Proposition \ref{prop:dgeqk}, one knows that $\P^d\OO(d)\cong S^dV\otimes \OO$, so the canonical sequence in the statement becomes $$0\to Q_{k,d}\to \P^k\OO(d)\to S^dV\otimes \OO\to 0.$$ Recall also that the composition $$H^0\P^k\OO(d)\to H^0\P^d\OO(d)\stackrel{\cong}\to H^0\OO(d)$$ is surjective, so the exact sequence above is exact at the level of global sections. Finally, since $P^d\OO(d)$ is trivial, one obtains the claimed splitting {\em 1}.
\vskip2mm
\noindent
{\em 2.}  We need to carefully analyze the morphism $u_k: \P^k\OO(d)\to S^kV\otimes\OO(d-k)$. By the homogeneity of $\P^k\OO(d)$ and the invariance of $u_k$ one can concentrate oneself in calculating the effect of $u_k$ at the fibre of $\P^k\OO(d)$ at $p=(0:\cdots:0:1)\in D(x_n)$. We denote $z_i=x_i/x_n$ for $i=0,\ldots,n-1$ the standard set of parameters at $p$ and $x_n^d$ the basis of $\OO(d)$ over any point of $D(x_n)$. An arbitrary section of $\OO(d)$ regular over $D(x_n)$ has the form $P/x_n^h$ with $P=x_n^{d+h}P_0+\cdots P_{d+h}$ a homogeneous polynomial in $x_0,\ldots,x_n$ of degree $d+h$, with $h\geq 0$. A generating set for the fiber of $\P^k\OO(d)$ at $p$ is therefore given by $d_k(P/x_n^h)|_p=\sum_j P_j(z_0,\ldots,z_{n-1})x_n^{d}$, obtained by applying formula (\ref{eq:dk}) and setting $x_n^d$ the basis of $\OO(d)$ on $D(x_n)$. Note in particular that a basis for the fiber of $\P^k\OO(d)$ at $p$ is given by $$\{z^Jx_n^d\ :\ |J|\leq k\}.$$ We have $u_k(d_k(P/x_n^h))=\xi^k(P/x_n^h)=\sum_{j=0}^k{k\choose j}\xi^j(1/x^h)\xi^{k-j}(P)$, by applying the iterated Leibnitz formula for the power of the derivation $\xi$ with values in the ring $\C(x_0,\ldots,x_n)^{\otimes 2}$. 

If $h=0$ one gets $\xi^k(P)=0$, since $d<k$ and all $k$-th order derivatives appearing in $\xi^k(P)$ vanish. This means that the summand $S^dV\otimes\OO$ of $\P^k\OO(d)$ is contained in the kernel of $u_k$. Hence Im$u_k=u_k(Q_{k,d})$. 

On the other hand, take a particular $P/x_n^h$ with $h>0$ of the form $P/x_n^h=x_n^{d-t}x^J$, with $J=(j_0,\ldots,j_{n-1})$ and $d<|J|=t\leq k$. Then \begin{eqnarray*} \xi^k(P/x_n^h)&=&\sum_{j=0}^k{k\choose j}\xi^j(x_n^{d-t})\xi^{k-j}(x^J)\\
&=& \sum_{j=0}^k\left({k\choose j} \prod_{l=0}^{j-1}(d-t-l)\right)(x_n^j\otimes x_n^{d-t-j})\xi^{k-j}(x^J).\end{eqnarray*}
Note that the terms containing $\xi^{k-j}(x^J)$ vanish for $k-j>t$.
Using the monomial ordering deglex with $x_0>\cdots>x_n$ on the first factor of $\C[x_0,\ldots,x_n]\otimes\C[x_0,\ldots,x_n]$, we see that 
$\xi^k(P/x_n^h)$ has a maximum term of the form $c x^Jx_n^{k-t}\otimes x_n^{d-k}$, with $c$ a non-zero constant. By letting $x^J$ vary among all the monomials in $x_0,\ldots,x_{n-1}$ of degree $t\leq k$ we see that $\xi^k(x_n^{d-t}x^J)$ form an independent set in the fiber of $S^kV\otimes \OO(d-k)$ at $p$. Observing that $d_k(x_n^{d-t}x^J)=z^J x_n^d$, we get that $u_k(z^J x_n^d)$ with $d<|J|\leq k$ are linearly independent.

\vskip1mm
Now we claim that $\eta^{k-d}\circ u_k=0$. Indeed it is sufficient to show that $\eta^{k-d}\circ \xi^k=0$ and this follows from an iterated application of Lemma \ref{lemma:compoperators}, showing that $$\eta^{k-d}(\xi^k(f))=\left(\prod_{i=0}^{k-d-1}(k-i)(d-k+i+1)\right)\xi^{d}(f)=0.$$ Hence Im$(u_k)\subset\ker\eta^{k-d}.$ Observe that \begin{eqnarray*}
\dim\ker\eta^{k-d}&=&\mbox{rk}(S^kV\otimes\OO(d-k))-\mbox{rk}(S^dV\otimes \OO)\\
&=&\mbox{rk} (\P^k\OO(d))- \mbox{rk}(\P^d\OO(d))\\&=&\mbox{rk} (Q_{k,d})\\
&=&{k+n\choose n}-{d+n\choose n}\end{eqnarray*} and note moreover that 
$${k+n\choose n}-{d+n\choose n}=\sum_{i=0}^{k-d-1} {k-i+n-1\choose n-1}.$$ Precedingly we showed  that in the fiber of $u_k(Q_{k,d})$ one can find linearly independent elements of the form $u_k(z^J x_n^d)$ for any $d<|J|\leq k$, whose number is exactly the number above, hence $u_k$ maps the fiber of $Q_{k,d}$ isomorphically to the fiber of $\ker\eta^{k-d}$, which completes the proof.
 \end{proof}
 With the same techniques as above one obtains the following proof of Proposition \ref{prop:dgeqk}, with some extra precisions which will be useful later.
 \begin{prop}\label{prop:dgeqkbis} For any $k\leq d$ the map $$u_k: \P^k\OO(d)\to S^k V\otimes \OO(d-k)$$ is an isomorphism and there exists a non zero constant $c$ such that the following diagram is commutative
\[\begin{CD} \P^k\OO(d) @>>>  \P^{k-1}\OO(d) \\ @V{u_k}VV  @V{cu_{k-1}}VV\\ S^k V\otimes \OO(d-k) @>\eta>> S^{k-1} V\otimes \OO(d-k+1)\end{CD}.\]\end{prop}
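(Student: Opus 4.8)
The statement has two parts: that $u_k$ is an isomorphism, and that the square commutes up to a nonzero scalar. For the first part I would argue exactly as in the proof of Theorem \ref{thm:splitting}, part 2. Both $\P^k\OO(d)$ and $S^kV\otimes\OO(d-k)$ are homogeneous bundles of rank $\binom{k+n}{n}$, and $u_k$ is $Sl(n+1)$-invariant, so by homogeneity it suffices to show that $u_k$ is an isomorphism on the fibre over $p=(0:\cdots:0:1)$. As recalled there, the fibre of $\P^k\OO(d)$ at $p$ has basis $\{z^Jx_n^d : |J|\leq k\}$ with $z_i=x_i/x_n$, and $d_k(x_n^{d-t}x^J)|_p=z^Jx_n^d$ for $t=|J|$; note that here $x_n^{d-t}x^J$ is a genuine polynomial section of $\OO(d)$ precisely because $t\leq k\leq d$.

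Next I would compute the deglex leading term (ordering $x_0>\cdots>x_n$ on the first tensor factor) of $u_k(z^Jx_n^d)=\xi^k(x_n^{d-t}x^J)$. The multi-index $I=J+(k-t)e_n$ contributes the monomial $x^Jx_n^{k-t}\otimes x_n^{d-k}$ with coefficient a nonzero rational multiple of $(d-t)(d-t-1)\cdots(d-k+1)$, while every other surviving $I$ shifts weight from higher- to lower-priority variables and so gives a strictly deglex-smaller first factor. The key point is that this coefficient is nonzero \emph{exactly because} $k\leq d$, in contrast with the case $d<k$ of Theorem \ref{thm:splitting}. Since the leading monomials $x^Jx_n^{k-t}$ are pairwise distinct, the vectors $u_k(z^Jx_n^d)$, $|J|\leq k$, are linearly independent, hence form a basis of the fibre of $S^kV\otimes\OO(d-k)$, and $u_k$ is an isomorphism.

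For commutativity of the square I would combine the universal property of principal parts with Lemma \ref{lemma:compoperators}. By construction $u_k\circ d_k=\xi^k$ and $u_{k-1}\circ d_{k-1}=\xi^{k-1}$, while the truncation $\tau:\P^k\OO(d)\to\P^{k-1}\OO(d)$ satisfies $\tau\circ d_k=d_{k-1}$. Hence $(\eta\circ u_k)\circ d_k=\eta\circ\xi^k$ and $(u_{k-1}\circ\tau)\circ d_k=\xi^{k-1}$. Applying Lemma \ref{lemma:compoperators} to $f\in\OO(d)$, so that $\deg f=d$, gives $\eta(\xi^k(f))=k(d-k+1)\xi^{k-1}(f)$, whence $\eta\circ u_k$ and $k(d-k+1)\,u_{k-1}\circ\tau$ agree after precomposition with $d_k$; setting $c=k(d-k+1)$, which is nonzero since $1\leq k\leq d$, yields the claim. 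The one genuinely delicate step is this last upgrade: $d_k$ is only a differential operator and not $\OO$-linear, so the operator identity of Lemma \ref{lemma:compoperators} does not by itself identify the two sheaf maps. The identification is legitimate only because $\mathrm{Im}(d_k)$ generates $\P^k\OO(d)$ as an $\OO$-module and both $\eta\circ u_k$ and $u_{k-1}\circ\tau$ are honest $\OO$-module homomorphisms; once that is granted, matching the constant is immediate.
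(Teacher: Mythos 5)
Your proof is correct and follows essentially the same route as the paper's, which is only a two-line sketch: the paper likewise reduces the isomorphism claim to the fibre computation of Theorem \ref{thm:splitting} (where the leading coefficient $(d-t)(d-t-1)\cdots(d-k+1)$ of the term $x^Jx_n^{k-t}\otimes x_n^{d-k}$ is nonzero precisely because $k\leq d$) and obtains the commutativity and the value of $c$ from Lemma \ref{lemma:compoperators}. Your explicit justification of the passage from the operator identity $\eta\circ\xi^k=k(d-k+1)\xi^{k-1}$ to the identity of $\OO$-module maps, via the fact that $\mathrm{Im}(d_k)$ generates $\P^k\OO(d)$ (equivalently, the uniqueness clause of Proposition \ref{prop:univpp}), is a detail the paper leaves implicit.
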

\begin{proof}(sketch) Under the assumption $d\geq k$ and using similar calculations as in the preceding proof one can shows the injectivity of $u_k$ and hence that it is an isomorphism. The commutativity of the diagram above and the precise value of the constant $c$ are given by Lemma \ref{lemma:compoperators}. \end{proof}
We study the stability of the bundle $Q_{k,d}$. Preliminarly we observe that $Q_{k,d}$ is a homogeneous bundle with associated graded bundle and quiver representation described by the following proposition.
 \begin{prop}\label{prop:quiverpp} The bundle $Q_{k,d}$ has an invariant filtration $0=F_{k+1}\subset F_{k}\subset\cdots\subset F_{d+1}=Q_{k,d}$ with quotients $F_{i}/F_{i+1}=S^i\Omega(d)$ for $i=d+1,\ldots,k$. In particular $gr(Q_{k,d})=\bigoplus_{i=d+1}^k S^i\Omega(d)$.
 The quiver representation associated to $Q_{k,d}$ has multiplicity spaces $V_\lambda=\C$ and the quiver $\mathcal{Q}(Q_{k,d})$ is the following:
 $$  [S^{d+1}\Omega(d)]\to\cdots\to[S^{k}\Omega(d)].$$
  \end{prop}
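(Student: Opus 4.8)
The plan is to prove the two assertions separately, getting the invariant filtration (and hence $gr(Q_{k,d})$ and the multiplicities) from the canonical sequences (\ref{eq:exactk}), and then locating the arrows of the quiver by realizing $Q_{k,d}$ as a subbundle of a bundle whose quiver is already fully understood. First I would use the Taylor truncation maps $\tau_{k,j}:\P^k\OO(d)\to\P^j\OO(d)$, which are $G$-invariant since they are composites of the invariant morphisms in (\ref{eq:exactk}). Setting $F_i=\ker(\tau_{k,i-1})$ for $i=d+1,\ldots,k+1$, one has $F_{k+1}=\ker(\tau_{k,k})=0$, while part {\em 1} of Theorem \ref{thm:splitting} identifies $F_{d+1}=\ker(\tau_{k,d})=Q_{k,d}$. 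Because $\tau_{k,i-1}=\tau_{i,i-1}\circ\tau_{k,i}$ we get $F_{i+1}\subset F_i$, and the surjection $\tau_{k,i}$ induces a $G$-invariant isomorphism $F_i/F_{i+1}\stackrel{\sim}{\to}\ker(\tau_{i,i-1})=S^i\Omega(d)$, the last identification being exactly (\ref{eq:exactk}). This produces the asserted invariant filtration with quotients $S^i\Omega(d)$.

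Restricting the $P$-module of $Q_{k,d}$ to $R$, this filtration becomes a filtration of $R$-modules; since $R$-representations are completely reducible and the quotients are already $R$-irreducible, we obtain $gr(Q_{k,d})=\bigoplus_{i=d+1}^k S^i\Omega(d)$. The summands are pairwise distinct irreducible bundles, so every multiplicity space is $V_\lambda=\C$, which settles that part of the statement. To find the arrows I would invoke part {\em 2} of Theorem \ref{thm:splitting}, which via the invariant map $u_k$ realizes $Q_{k,d}$ as the $G$-invariant subbundle $\ker(\eta^{k-d})$ of $M:=S^kV\otimes\OO(d-k)$. Applying Proposition \ref{prop:schurquiver} to $S^kV\otimes\OO$ (the partition $\mu=(k)$) and twisting by $\OO(d-k)$, the bundle $M$ has all multiplicity spaces equal to $\C$ and quiver
$$[\OO(d)]\to[\Omega(d)]\to\cdots\to[S^k\Omega(d)],$$
a single chain in which every consecutive arrow is non-zero and no other arrow occurs.

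Finally I would read off $\mathcal{Q}(Q_{k,d})$ from the inclusion $[Q_{k,d}]\hookrightarrow[M]$. By the equivalence of Theorem \ref{thm:quiverequivalence} the $G$-invariant injection $u_k|_{Q_{k,d}}$ makes $[Q_{k,d}]$ a subrepresentation of $[M]$, whose structure maps are the restrictions of those of $[M]$. By the first part its support is $\{[S^i\Omega(d)]:i=d+1,\ldots,k\}$, and at each such vertex its multiplicity space is all of the corresponding copy of $\C$ in $[M]$. Hence for consecutive $i,i+1$ in this range the arrow of $[Q_{k,d}]$ is the restriction of the non-zero arrow of $[M]$ to the full space $\C$, so it is non-zero; and no non-consecutive arrow can appear, since none appears in $[M]$. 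This gives precisely $\mathcal{Q}(Q_{k,d})=[S^{d+1}\Omega(d)]\to\cdots\to[S^k\Omega(d)]$, as claimed.

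The main obstacle is exactly the non-vanishing of the arrows, i.e. the non-splitting of the successive extensions between the $S^i\Omega(d)$. Attacking this directly would amount to computing the relevant extension classes in $Ext^1(S^i\Omega(d),S^{i+1}\Omega(d))^G\cong\C$, which is delicate. The device of passing to $\ker(\eta^{k-d})\subset S^kV\otimes\OO(d-k)$, whose ambient quiver is completely determined by Proposition \ref{prop:schurquiver}, bypasses this and reduces the whole point to the routine observation that a subrepresentation containing the full multiplicity spaces inherits the ambient arrows.
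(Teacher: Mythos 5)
Your proposal is correct and follows essentially the same route as the paper: the filtration is obtained by factoring the truncation $\P^k\OO(d)\to\P^d\OO(d)$ through the intermediate $\P^i\OO(d)$ and using the kernels $S^i\Omega(d)$ from (\ref{eq:canonicseqpp}), and the arrows are read off from the identification $Q_{k,d}\cong\ker(\eta^{k-d})\subset S^kV\otimes\OO(d-k)$ together with Proposition \ref{prop:schurquiver}. Your explicit subrepresentation argument (full one-dimensional multiplicity spaces inherit the non-zero ambient arrows) is just a more detailed phrasing of the paper's remark that $\mathcal{Q}(Q_{k,d})$ is the ``difference'' of the quivers of $S^kV\otimes\OO(d-k)$ and $S^dV\otimes\OO$.
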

  \begin{proof} The existence of the filtration above is obtained factorizing the canonical surjection $\P^k\OO(d)\to \P^d\OO(d)$ in the following composition $$\P^k\OO(d)\to \P^{k-1}\OO(d)\to\cdots\to \P^d\OO(d)$$ and recalling that the kernel of any map $\P^i\OO(d)\to \P^{i-1}\OO(d)$ is $S^i\Omega(d)$. 

To show the assertion about the quiver representation of $Q_{k,d}$ we use the identification $Q_{k,d}\cong \ker\eta^{k-d}$ given by Theorem \ref{thm:splitting}. From this it follows that the quiver associated to $Q_{k,d}$ is the {\em difference} of the quivers associated to $S^kV\otimes\OO(d-k)$ and to $S^dV\otimes\OO$, which we know given by $[S^{0}\Omega(d)]\to\cdots\to[S^{k}\Omega(d)]$ and $[S^{0}\Omega(d)]\to\cdots\to[S^{d}\Omega(d)]$, respectively,  as a consequence of Proposition \ref{prop:schurquiver}. The stated formula for  $\mathcal{Q}(Q_{k,d})$ follows.
 \end{proof}
 \begin{cor}\label{cor:ppquiver} The quiver representations associated to $\P^k\OO(d)$ for varying $k$ and $d$ have all multiplicity spaces $V_\lambda$
 with $\dim V_\lambda=1$ and the graphs $\mathcal{Q}(\P^k\OO(d))$ are the following:
\vskip2mm 
\mbox{$[S^{0}\Omega(d)]\to\cdots\to[S^{k}\Omega(d)]$}  \mbox{ for $d\geq k\geq 0$ or $d<0$};
\vskip2mm 
  $[S^{0}\Omega(d)]\to\cdots\to[S^{d}\Omega(d)]
 \cup [S^{d+1}\Omega(d)]\to\cdots\to[S^{k}\Omega(d)]$  
 
 for $k>d\geq 0$.
 \end{cor}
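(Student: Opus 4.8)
The plan is to deduce this corollary by assembling the quiver computations already carried out, treating the two ranges of $(k,d)$ separately and in each case reducing to a bundle whose quiver is known.

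First, for $d\geq k\geq 0$ or $d<0$, I would invoke Proposition \ref{prop:dgeqk} to replace $\P^k\OO(d)$ by the isomorphic bundle $S^kV\otimes\OO(d-k)=(S^kV\otimes\OO)\otimes\OO(d-k)$, and then apply Proposition \ref{prop:schurquiver} to $S^kV\otimes\OO$, i.e. with $\mu=(k)$ the single-row partition. Since a single row has no two boxes in any column, the relevant $\nu$ run exactly over the single-row partitions $(j)$ with $0\leq j\leq k$; hence the support consists of the $[S^j\Omega(k)]$, the multiplicity spaces are all $\C$, and $\mathcal{Q}(S^kV\otimes\OO)$ is the linear chain $[S^0\Omega(k)]\to\cdots\to[S^k\Omega(k)]$. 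Twisting by $\OO(d-k)$ then sends each $S^j\Omega(k)$ to $S^j\Omega(d)$ while leaving the dimension vector and the arrows unchanged, which yields the first claimed graph.

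Second, for $k>d\geq 0$, I would start from the splitting $\P^k\OO(d)\cong Q_{k,d}\oplus (S^dV\otimes\OO)$ of Theorem \ref{thm:splitting}. By additivity of the equivalence of Theorem \ref{thm:quiverequivalence}, the quiver representation of a direct sum is the direct sum of the two representations, so $\mathcal{Q}(\P^k\OO(d))$ is the union of $\mathcal{Q}(Q_{k,d})$ and $\mathcal{Q}(S^dV\otimes\OO)$. Proposition \ref{prop:quiverpp} gives the first as the chain $[S^{d+1}\Omega(d)]\to\cdots\to[S^k\Omega(d)]$ with all multiplicities $1$, while Proposition \ref{prop:schurquiver} with $\mu=(d)$ gives the second as $[S^0\Omega(d)]\to\cdots\to[S^d\Omega(d)]$, again with all multiplicities $1$. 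The two supports $\{[S^j\Omega(d)]:0\leq j\leq d\}$ and $\{[S^j\Omega(d)]:d+1\leq j\leq k\}$ are disjoint, so no multiplicity spaces add up and every $\dim V_\lambda$ stays equal to $1$; this reproduces the second graph, the symbol $\cup$ denoting the disjoint union of the two chains.

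The only point requiring real care is the behaviour of the quiver representation under tensoring by $\OO(d-k)$ in the first case. Since tensoring by a line bundle is an invertible functor on the category of homogeneous bundles, Theorem \ref{thm:quiverequivalence} shows it induces an isomorphism of quiver representations after relabelling each irreducible $S^j\Omega(m)$ as $S^j\Omega(m+d-k)$: the structure map $\theta$ is merely transported, so the dimension vector and all the maps $f_{ij}$, hence all the arrows, are preserved. Granting this observation, the two cases combine to give exactly the stated graphs, which completes the argument.
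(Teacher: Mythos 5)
Your argument is correct and follows essentially the same route as the paper: Proposition \ref{prop:dgeqk} plus Proposition \ref{prop:schurquiver} (applied to the single-row partition, together with the twist by $\OO(d-k)$) for the range $d\geq k\geq 0$ or $d<0$, and the splitting $\P^k\OO(d)\cong Q_{k,d}\oplus(S^dV\otimes\OO)$ combined with Propositions \ref{prop:quiverpp} and \ref{prop:schurquiver} for $k>d\geq 0$. Your explicit justification that twisting by a line bundle preserves the dimension vector and the arrows, and that the two supports in the second case are disjoint, only makes explicit what the paper leaves implicit.
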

 \begin{proof} The cases $d\geq k\geq 0$ and $d<0$ are dealt with the help of Proposition \ref{prop:dgeqk}, which gives the isomorphism $\P^k\OO(d)\cong S^kV\otimes\OO(d-k)$ and with Proposition \ref{prop:schurquiver} applied to $S_\mu V=S^dV$.
 
 The statement in the case $k>d\geq 0$ is a consequence of the isomorphism $\P^k\OO(d)\cong Q_{k,d}\oplus (S^dV\otimes \OO)$, which implies that the associated quiver representation is the direct sum of the quiver representations associated to $Q_{k,d}$ and $S^dV\otimes \OO$. \end{proof}
 We are now in position to show the following stability result for $Q_{k,d}$, which, together with the results proved in Theorem \ref{thm:splitting} and Proposition \ref{prop:quiverpp}, completes the proof of Theorem \ref{thm:main1}.
 \begin{prop}[end of proof of Theorem \ref{thm:main1}] $Q_{k,d}$ is a stable homogeneous bundle for $k>d$ and $n\geq 2$.
 \end{prop}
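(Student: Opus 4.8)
The plan is to prove stability in two stages: first show that the quiver representation $[Q_{k,d}]$ is $\mu$-stable in the sense of Definition \ref{def:stability}, and then upgrade this to genuine stability of the bundle by exploiting the fact that $gr(Q_{k,d})$ is multiplicity-free. For the first stage I invoke the dictionary of Theorem \ref{thm:rohfai}: a homogeneous bundle is semistable iff its quiver representation is $\mu$-semistable, and the representation is $\mu$-stable iff the bundle has the form $W\otimes F$ with $F$ stable homogeneous and $W$ an irreducible $G$-module. Thus, once $\mu$-stability of $[Q_{k,d}]$ is established I obtain $Q_{k,d}\cong W\otimes F\cong F^{\oplus\dim W}$ with $F$ stable; comparing graded bundles, $gr(Q_{k,d})=\bigoplus_{i=d+1}^{k}S^i\Omega(d)$ is multiplicity-free by Proposition \ref{prop:quiverpp}, whereas $gr(F^{\oplus\dim W})$ repeats each summand $\dim W$ times, forcing $\dim W=1$ and hence $W=\C$ and $Q_{k,d}\cong F$ stable.

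It remains to prove $[Q_{k,d}]$ is $\mu$-stable, and the first task is to classify the subrepresentations. By Proposition \ref{prop:quiverpp} and Corollary \ref{cor:ppquiver} the representation is the chain $[S^{d+1}\Omega(d)]\to\cdots\to[S^{k}\Omega(d)]$ with every multiplicity space $V_\lambda=\C$ and every arrow nonzero, hence an isomorphism $\C\to\C$. Since each $V_\lambda$ is one-dimensional, each component $V'_\lambda$ of a subrepresentation is $0$ or $\C$, and the condition $f_{i,i+1}(V'_i)\subseteq V'_{i+1}$ says that the support of a subrepresentation is closed under advancing along the chain, i.e. is a suffix $\{i,i+1,\dots,k\}$. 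So the only proper nonzero subrepresentations are the filtration terms $F_i$, with $gr(F_i)=\bigoplus_{j=i}^{k}S^j\Omega(d)$ for $d+2\le i\le k$; there are no relations to complicate this, since a chain contains no commuting squares.

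The computational heart is then to verify $\mu_F(gr(F_i))>0$ for each such $i$, with $F=gr(Q_{k,d})$; unwinding (\ref{eq:muF}) this is exactly the slope inequality $\mu(F_i)<\mu(Q_{k,d})$. Here I use Observation \ref{obs:quiverord}: each arrow shifts the slope by $\mu(\Omega)=c_1(\Omega)/n=-(n+1)/n<0$, so the slopes $s_j:=\mu(S^j\Omega(d))$ satisfy $s_{j+1}=s_j+\mu(\Omega)<s_j$ and are strictly decreasing in $j$. Writing $\mu(Q_{k,d})$ as the rank-weighted average of the $s_j$ over $j=d+1,\dots,k$ and $\mu(F_i)$ as the same average over $j=i,\dots,k$, the complementary block $j=d+1,\dots,i-1$ consists of strictly larger slopes (all at least $s_{i-1}>s_i$). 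Since mixing in a block of strictly larger values raises a weighted average, one gets $\mu(Q_{k,d})>\mu(F_i)$; concretely, with $A=\sum_{j\ge i}\mathrm{rk}(S^j\Omega(d))$, $B=\sum_{d+1\le j<i}\mathrm{rk}(S^j\Omega(d))$ and $\bar s_C$ the average over the complementary block, $\mu(Q_{k,d})-\mu(F_i)=\tfrac{B}{A+B}\bigl(\bar s_C-\mu(F_i)\bigr)>0$. This establishes $\mu$-stability.

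I expect the main obstacle to be not any single estimate but making the subrepresentation analysis airtight: the whole argument rests on knowing that all multiplicity spaces are one-dimensional and all arrows are nonzero, so that the subobjects are precisely the suffixes $F_i$ and nothing more exotic, which is exactly the content of Proposition \ref{prop:quiverpp}. The hypothesis $n\ge 2$ enters decisively only in the final upgrade: for $n\ge 2$ the pieces $S^j\Omega(d)$ are honestly higher-rank stable bundles, so the multiplicity-one conclusion $\dim W=1$ yields a genuinely stable $Q_{k,d}$, whereas for $n=1$ the graded pieces are line bundles and $Q_{k,d}$ merely splits.
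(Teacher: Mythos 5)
Your first stage is correct and is essentially the paper's argument: the subrepresentations of the chain $[S^{d+1}\Omega(d)]\to\cdots\to[S^{k}\Omega(d)]$ are exactly the suffixes $F_i$, and since the slopes $\mu(S^j\Omega(d))$ strictly decrease in $j$, every proper nonzero suffix has slope strictly smaller than $\mu(Q_{k,d})$, which gives $\mu_F(gr(F_i))>0$ and hence $\mu$-stability of $[Q_{k,d}]$.

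The second stage, however, has a genuine gap. Theorem \ref{thm:rohfai} gives $Q_{k,d}\cong W\otimes F$ with $W$ an irreducible $G$-module and $F$ stable, and you conclude $\dim W=1$ by claiming that $gr(W\otimes F)$ repeats each summand of $gr(F)$ exactly $\dim W$ times. That claim is false: here $W\otimes F$ is the homogeneous bundle $(W\otimes\OO)\otimes F$, and its associated graded is computed from the $R$-module $W|_R\otimes F(x)$, not from $\dim W$ copies of $F(x)$ with trivial action. The plain-bundle isomorphism $W\otimes F\cong F^{\oplus\dim W}$ does not preserve $gr$, which is an invariant of the $P$-module structure rather than of the underlying bundle; for instance $gr(V\otimes\OO)=\Omega(1)\oplus\OO(1)$ while $gr(\OO^{\oplus(n+1)})=\OO^{\oplus(n+1)}$, even though both bundles are trivial. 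A direct counterexample to your inference is $\mathcal{E}=S^dV\otimes\OO$ with $d\geq 1$: by Theorem \ref{thm:rohfai} its quiver representation is $\mu$-stable, by Proposition \ref{prop:schurquiver} its graded bundle $\bigoplus_{i=0}^{d}S^i\Omega(d)$ is multiplicity-free and its quiver is a chain, yet $\mathcal{E}$ is a trivial bundle and is not stable. So ``$\mu$-stable with multiplicity-free $gr$'' does not force $\dim W=1$, and an additional argument is required. The paper supplies it combinatorially: writing $W=S_\mu V$, if $\mu$ has more than one row, or if every component of $gr(E)$ is some $S_\alpha\Omega(l)$ with $\alpha$ nontrivial, then tensoring produces at least two incomparable vertices in the ordering of Observation \ref{obs:quiverord}, contradicting the chain structure of $\mathcal{Q}(Q_{k,d})$; and if $gr(E)$ contains a line bundle component, then so does $gr(Q_{k,d})$, contradicting (for $n\geq 2$) the fact that its components $S^i\Omega(d)$ with $i\geq d+1$ all have rank greater than one. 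You would need to add an argument of this kind to close the gap.
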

 \begin{proof} In view of the result stated in Theorem \ref{thm:rohfai} it is sufficient to show that the quiver representation $[Q_{k,d}]$ is $\mu$-stable as in Definition \ref{def:stability} and that $Q_{k,d}$ is not isomorphic to any bundle of the form $S_\mu V\otimes E$ with $E$ stable homogeneous. 
  Writing $F=gr(Q_{k,d})$ and adopting the notations preceding Definition \ref{def:stability}, we have to show that $\mu_F(E')\geq 0$ for any $E'$ corresponding to a subrepresentation of $[Q_{k,d}]$, and moreover that $\mu_F(E')=0$ only for $E'=[Q_{k,d}]$ or $0$. Using the description given in Proposition \ref{prop:quiverpp}, one can see that the non zero subrepresentations of $[Q_{k,d}]$ have the form 
  \begin{equation}\label{eq:subQ}[S^{i}\Omega(d)]\to\cdots\to[S^{k}\Omega(d)],\end{equation} with $i\geq d+1$. Denoting with $\mu(\mathcal{E})=c_1(\mathcal{E})/\mbox{rk}(\mathcal{E})$ the ordinary slope of sheaves in $\PP^n$, we recall that $\mu(\Omega)=-1-1/n<0$ and that $\mu(S^i\Omega(d))=i\mu(\Omega)+d$,  from which we get the inequalities $$ \mu(S^{d+1}\Omega(d))>\cdots>\mu(S^k\Omega(d)).$$ Applying the standard inequalities on slopes, it follows that $\mu(F)\geq \mu(S^{i}\Omega(d)\oplus\cdots\oplus S^{k}\Omega(d))$ for any $d+1\leq i\leq k$, with equality only for $i=d+1$. If $E'$ is a subbundle of $Q_{k,d}$ corresponding to a subrepresentation as in (\ref{eq:subQ}), from the formula (\ref{eq:muF}) we obtain $\mu_F(E')= c_1(F)rk(E')-rk(F)c_1(E')\geq 0,$ with equality only if $i=d+1$. Hence $[Q_{k,d}]$ is $\mu$-stable.
\vskip1mm
\noindent  
Now we have to exclude the possibility that $Q_{k,d}\cong S_\mu V\otimes E $ with non trivial $\mu$. If such a isomorphism exists, we show first that then necessarily $S_\mu V=S^hV$ for some $h$. Indeed assume on the contrary that $\mu$ has more than one row. Then by Proposition 
\ref{prop:gradedschur} in $\mathcal{Q}(S_\mu V\otimes\OO)$ there would exist incomparable vertexes under the ordering described in Observation \ref{obs:quiverord}, for example all the vertexes $[S_\nu\Omega(|\mu|)]$ with $|\mu\setminus\nu|=1$. Given a vertex $E_i=[S_\alpha \Omega(l)]$ in $gr(E)$ choose a irreducible summand $G_\nu$ of $S_\nu\Omega(|\mu|)\otimes E_i$ for any  $\nu$ as above. Observe that $G_\nu$ is associated to some partition  $\nu'$ with $|\nu'|=|\nu|+|\alpha|=|\mu|+|\alpha|-1$ independent of $\nu$. Hence the vertexes $[G_\nu]$ of $\mathcal{Q}(S_\mu V\otimes E)$ would be incomparable, contradicting the known structure of the quiver associated to $Q_{k,d}$. Hence we are left with the possibility that $Q_{k,d}\cong S^h V\otimes E $ for some $h>0$ and homogeneous vector bundle $E$. If $gr(E)$ contains a component given by a line bundle, since we know that $gr(S^h V\otimes\OO)$ contains $\OO(h)$ as a component, then also in $gr(Q_{k,d})$ we would find a line bundle component. However this contradicts the fact that the only components of $gr(Q_{k,d})$ are $S^i\Omega(d)$ with $i\geq d+1>0$, which are not line bundles since $n\geq 2$. So we are left with $gr(E)$ without any line bundle component. Then any component of $gr(E)$ is of the form $[S_\alpha \Omega(l)]$ with $\alpha$ a non trivial partition. Consider also the component $S^h\Omega(h)$ of $gr(S^h V\otimes\OO)$. Then in $gr(Q_{k,d})$ we would find all those component coming from $S_\alpha \Omega(l)\otimes S^h\Omega(h)$, which have the form $S_\nu\Omega(l+h)$, where $\nu$ is any partitions obtained from $\alpha$ by adding $k$ boxed to it, with no two on the same column. Since $\alpha$ is non trivial one can easily see that there are at least two of such $\nu$, and they will produce incomparable elements in $\mathcal{Q}(Q_{k,d})$, a contradiction. 
 \end{proof}
\begin{question}{\em Note that in this way we have shown also the stability of the kernel of the sheaf epimorphism
$$\eta^k: S^kV\otimes \OO(d-k)\to S^d V\otimes\OO$$ for any $k> d\geq 0$. Then replacing $\eta^k$ with a sufficiently general map $\phi$ between the same sheaves one gets that $\ker(\phi)$ is also stable. This leads quite naturally to ask for numerical conditions on $\dim U$, $\dim W$, $n$ and $l$ so that a general map
$$\phi: U\otimes\OO_{\PP^n}(-l)\to W\otimes\OO_{\PP^n}$$ has stable kernel.
An answer to this question can be viewed as a non trivial extension of the results in \cite{coanda} and \cite{maro}, which provide conditions for the stability of $\ker(\phi)$ in the case $\dim W=1$.}\end{question}

\section{Global sections of $\P^k\OO(d)$ and Taylor truncation maps}\label{sec:thm2}
In this section we provide the proof of Theorem \ref{thm:main2} about the rank of the Taylor truncation map
$$ H^0\P^k\OO(d)\to H^0\P^h\OO(d),$$ which arises from the canonical sheaf epimorphism $\P^k\OO(d)\to \P^h\OO(d)$ defined for any $k\geq h$ and $d$.  From the preceding section we know $\P^k\OO(d)\cong S^kV\otimes\OO(d-k)$ for $d\geq k$ or $d<0$ and $\P^k\OO(d)\cong Q_{k,d}\oplus (S^dV\otimes \OO)$ for $0\leq d<k$. So $H^0\P^k\OO(d)$ is easily computed when $d\geq k$ or $d<0$, precisely we have $H^0\P^k\OO(d)=S^kV\otimes S^{d-k}V$ for $d\geq k$  and $H^0\P^k\OO(d)=0$ for $d<0$. 
This already shows that Theorem \ref{thm:main2} is trivially true for $d<0$. 
	To compute the global sections $H^0\P^k\OO(d)$ in the case $0\leq d<k$ we need to know the global sections of $Q_{k,d}$. For any $l$ we can compute the cohomology groups $H^0$ and $H^1$ of $S^{i}\Omega(l)$ using Bott's theorem, according to the formulation given in \cite{ko} theorem 5.14. Remind that by Bott's theorem only one of the cohomology groups $H^jS^{i}\Omega(l)$ can be non zero, with $j$ determined accordingly to the position of the maximal $P$-weight of $S^{i}\Omega(l)$ in the weight space. From the formula appearing in Proposition \ref{prop:graded} we know that the maximal $P$-weight associated to $S^{i}\Omega(l)$ is $\lambda=(l-i)L_1+iL_2=(l-2i)\lambda_1+i\lambda_2$. If $l\geq 2i$ then, setting $g=\sum_{i=1}^n$, one finds that $\lambda+g$ belongs to the interior of the Weyl chamber of $Sl(n+1)$ and then Bott's theorem says that $H^0S^{i}\Omega(l)=V_\lambda=S_{(l-i,i)} V$. We have therefore the following formula:
\begin{equation}\label{eq:h0symmomega} H^0S^{i}\Omega(l)=\left\{\begin{array}{ll}S_{(l-i,i)} V&\mbox{if }l\geq 2i\\
0&\mbox{otherwise}\end{array}\right.\end{equation}
Let us now assume $l<2i$  and let $w$ be the element of the Weyl group given by the reflection with respect to the hyperplane orthogonal to $\alpha_1$, and let us denote the Killing form for $Sl(n+1)$ with $(\cdot,\cdot)$, see \cite{fuha} for the notations. We have
\begin{eqnarray*} w(\lambda+g)&=&\lambda+g-2\frac{(\lambda+g,\alpha_1)}{(\alpha_1,\alpha_1)}\alpha_1\\
&=&\lambda+g- (l-2i+1)(2\lambda_1-\lambda_2)\\
&=&(2i-l-1)\lambda_1+(l-i+2)\lambda_2+\sum_{i=3}^n\lambda_i,\end{eqnarray*}
which belongs to the interior of the Weyl chamber of $Sl(n+1)$ iff $0\leq l-i+1<i$. 
In this case by Bott's theorem we have $H^1S^{i}\Omega(l)=V_\nu$ with $V_\nu$ the irreducible representation of $Sl(n+1)$ with maximal weight $\nu=w(\lambda+g)-g$, i.e. $\nu=(2i-l-2)\lambda_1+(l-i+1)\lambda_2=(i-1)L_1+(l-i+1)L_2$. Hence we obtain the following formula fo $H^1$:
\begin{equation}\label{eq:h1symmomega} H^1S^{i}\Omega(l)=\left\{\begin{array}{ll}S_{(i-1,l-i+1)} V&\mbox{if }0\leq l-i+1<i\\
0&\mbox{otherwise}\end{array}\right.\end{equation}
Applying the results above to the irreducible components of $Q_{k,d}$ we see that $H^0S^i\Omega(d)=0$ for any $i=d+1,\ldots,k$, and then, using the filtration from Proposition \ref{prop:quiverpp}, one sees easily that $H^0Q_{k,d}=0$. Hence one has the following result.
\begin{equation}\label{eq:h0pp}
H^0\P^k\OO(d)=\left\{\begin{array}{ll}S^kV\otimes S^{d-k}V&\mbox{for }d\geq k\\
S^dV & \mbox{for }0\leq d< k\\
0 & \mbox{for } d< 0\end{array}\right.\end{equation}
This allows to deal with one more case of Theorem \ref{thm:main2}, namely the case $h\leq d<k$. Infact, using the composition
$$\P^k\OO(d)\to \P^d\OO(d)\to\P^h\OO(d)$$ and the isomorphism $H^0\P^k\OO(d)\cong H^0\P^d\OO(d)=S^d V$ just proved, we can reduce the case $h\leq d<k$ to $h<d=k$. Therefore the proof of Theorem \ref{thm:main2} will be complete after treating the case $h< k\leq d$. 
From Proposition \ref{prop:dgeqkbis} one deduces the existence of the commutative diagram
\[\begin{CD} \P^k\OO(d) @>>>  \P^{h}\OO(d) \\ @V\cong V{u_k}V  @V\cong V {cu_{k-1}} V\\ S^k V\otimes \OO(d-k) @>\eta^{k-h}>> S^{h} V\otimes \OO(d-h)\end{CD}\] for a suitable non zero constant $c$. Using the results and the notations of the preceding section, we see that $$\ker\eta^{k-h}\cong Q_{k,h}(d-h).$$ Moreover, taking global sections we get the linear map 
\begin{equation}\label{eq:lineartrunc}\eta^{k-h}: S^kV\otimes S^{d-k}V\to S^hV\otimes S^{d-h}V,\end{equation} with the explicit formula  $$\eta^{k-h}=\sum_{|I|=k-h} {k-h\choose I}\partial^I\otimes x^I.$$
Recall that $S^iV^{\vee}$ can be identified with $S^i(\C\partial_0\oplus\cdots\oplus\C\partial_n)$ by identifying the dual basis of the monomials $(x^I)^{\ast}$ with the basis of differential operators $\partial^I/I!$, with $I!=I_0!\cdots I_n!$. In this setting one can write $\eta^{k-h}=(k-h)!\sum (x^I)^\ast\otimes x^I$. Knowing that multiplication by a homogeneous polynomials $F$, defined as a map $S^iV\stackrel{F}\to S^{i+\deg(F)}V$, is dual to the contraction $S^{i+\deg(F)}V^{\vee}\stackrel{\langle\cdot,F\rangle}\to S^iV^{\vee}$, we see that the map $\eta^{k-h}$ is self-adjoint under the linear isomorphism $Sym^\bullet V^{\vee}\cong Sym^\bullet V$ identifying $(x^I)^\ast$ with $x^I$. With this set up, we can reduce the proof that the map (\ref{eq:lineartrunc}) has maximal rank to the case of injectivity, i.e. to the following statement.
\begin{prop}[End of proof of Theorem \ref{thm:main2}]\label{prop:main2} Under the assumption that $\dim(S^kV\otimes S^{d-k}V)\leq\dim(S^hV\otimes S^{d-h}V)$ the map (\ref{eq:lineartrunc}) is injective.\end{prop}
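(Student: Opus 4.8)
The plan is to exploit the $GL(V)$-equivariance of $\eta^{k-h}$ and to reduce the injectivity to a nonvanishing statement for finitely many scalars indexed by the irreducible constituents of the source. The operator $\eta=\sum_i\partial_i\otimes x_i$ is $GL(V)$-equivariant, because $\sum_i\partial_i\otimes x_i$ is precisely the identity element of $V^\vee\otimes V=\mathrm{End}(V)$, which is invariant; hence so is every power $\eta^{k-h}$. By the Pieri (Cauchy) formula one has the multiplicity-free decompositions
\[
S^kV\otimes S^{d-k}V=\bigoplus_{j=0}^{\min(k,d-k)}S_{(d-j,j)}V,\qquad
S^hV\otimes S^{d-h}V=\bigoplus_{j=0}^{\min(h,d-h)}S_{(d-j,j)}V,
\]
so that, by Schur's lemma, the restriction of $\eta^{k-h}$ to the summand $S_{(d-j,j)}V$ of the source is multiplication by a scalar $c_j$ into the summand of the same type in the target (and $c_j$ is forced to vanish whenever such a summand is absent from the target). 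Thus the map is injective if and only if $c_j\neq0$ for every $j$ with $0\le j\le\min(k,d-k)$.

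First I would record that the dimension hypothesis forces $\min(k,d-k)\le\min(h,d-h)$: both displayed sums run over the same irreducibles $S_{(d-j,j)}V$, all of positive dimension, starting at $j=0$, so a strictly longer source sum would have strictly larger total dimension, contradicting $\dim(S^kV\otimes S^{d-k}V)\le\dim(S^hV\otimes S^{d-h}V)$. In particular every irreducible occurring in the source also occurs in the target, so it remains only to show each $c_j$ is nonzero.

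To compute the $c_j$ I would organise the family $\bigoplus_{a+b=d}S^aV\otimes S^bV$ into an $\mathfrak{sl}_2$-module commuting with $GL(V)$, realising the classical $(\mathfrak{gl}_2,\mathfrak{gl}(V))$ Howe duality on $\mathrm{Sym}(\C^2\otimes V)$. Writing the two symmetric factors in variables $x_i$ and $y_i$, the operators $E=\xi=\sum_i x_i\partial_{y_i}$, $F=\eta=\sum_i y_i\partial_{x_i}$ and $H=\sum_i(x_i\partial_{x_i}-y_i\partial_{y_i})$ satisfy $[E,F]=H$, $[H,E]=2E$, $[H,F]=-2F$ (a direct computation, in the same spirit as Lemma~\ref{lemma:compoperators}), and they commute with $GL(V)$. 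Here $S^aV\otimes S^bV$ is the $H$-eigenspace of weight $a-b$, and the $S_{(d-j,j)}V$-isotypic part of the whole family is $S_{(d-j,j)}V\otimes W_j$, where $W_j$ is the irreducible $\mathfrak{sl}_2$-module of highest weight $d-2j$: its weights $d-2j,d-2j-2,\dots,-(d-2j)$ each occur once, matching the Pieri multiplicities. Under this identification $\eta^{k-h}$ acts on the $j$-part as $1\otimes F^{k-h}$, sending the weight-$(2k-d)$ line to the weight-$(2h-d)$ line of $W_j$.

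The nonvanishing of $c_j$ is then exactly the statement that $F^{k-h}$ is nonzero on the weight-$(2k-d)$ line of $W_j$. I would obtain it from the highest weight vector $w_j\in S^{d-j}V\otimes S^jV$, the $\xi$-primitive vector spanning the top $\mathfrak{sl}_2$-weight of the $j$-copy: the source line is $F^{d-j-k}w_j$, and $\eta^{k-h}$ applied to it equals $F^{d-j-h}w_j$, which is nonzero precisely when $d-j-h\le d-2j$, i.e. $j\le h$ (together with $j\le d-h$), that is $j\le\min(h,d-h)$. Since $j\le\min(k,d-k)\le\min(h,d-h)$ by the previous paragraph, this holds for every relevant $j$, so all $c_j\neq0$ and the map is injective. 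The main obstacle is exactly this last computation of the scalars; everything else is formal equivariance and bookkeeping, and the key input is recognising the $\mathfrak{sl}_2$-symmetry (encoded in Lemma~\ref{lemma:compoperators}) that makes $\eta$ a lowering operator on irreducible $\mathfrak{sl}_2$-modules with one-dimensional weight spaces.
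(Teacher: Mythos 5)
Your proof is correct, but it takes a genuinely different route from the paper. The paper identifies the kernel of the map (\ref{eq:lineartrunc}) with $H^0Q_{k,h}(d-h)$, computes $H^0$ and $H^1$ of the graded pieces $S^i\Omega(d)$ via Bott's theorem, and then invokes Theorem A of \cite{bora} to cancel each $H^0S^i\Omega(d)=S_{(d-i,i)}V$ against $H^1S^{d-i+1}\Omega(d)=S_{(d-i,i)}V$ along the quiver of $Q_{k,h}(d-h)$; the arithmetic condition $d\leq h+k$ enters in checking $d-i+1\leq k$. You instead bypass the bundle $Q_{k,h}$ entirely and work with the linear map directly: multiplicity-freeness of the Clebsch--Gordan decompositions $S^aV\otimes S^{d-a}V=\bigoplus_j S_{(d-j,j)}V$ plus Schur's lemma reduce injectivity to the nonvanishing of one scalar $c_j$ per constituent, and the $(\mathfrak{sl}_2,GL(V))$ Howe duality on $\bigoplus_{a+b=d}S^aV\otimes S^bV$ computes $c_j$ as the action of the lowering operator $F^{k-h}$ between one-dimensional weight spaces of the irreducible $\mathfrak{sl}_2$-module of highest weight $d-2j$, nonzero exactly when $j\leq h$ --- which your dimension-count argument guarantees. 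I checked the details (the identification of $\eta$ with $F$, the weight bookkeeping $F^{d-j-k}w_j\mapsto F^{d-j-h}w_j$, and the criterion $F^sw\neq 0\iff s\leq d-2j$, which is indeed the $\mathfrak{sl}_2$ content of Lemma \ref{lemma:compoperators}) and they are sound. Your argument is more elementary and self-contained, avoiding Bott's theorem and the cancellation theorem of \cite{bora}, and it yields more: an explicit $GL(V)$-decomposition of the kernel and image of the truncation map in all cases. What the paper's approach buys is coherence with the quiver-theoretic machinery developed in the rest of the article and a method that still applies when the relevant spaces of sections do not admit such a clean multiplicity-free, Howe-dual description. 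It is worth noting that both proofs ultimately hinge on the same irreducibles $S_{(d-j,j)}V$ appearing and cancelling in the correct range, so the two arguments are dual reflections of one combinatorial fact.
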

\begin{proof} The dimensions of domain and codomain of the map (\ref{eq:lineartrunc}) are ${k+n\choose n}{d-k+n\choose n}$ \mbox{ and } ${h+n\choose n}{d-h+n\choose n},$ respectively. We claim that the inequality ${k+n\choose n}{d-k+n\choose n}\leq {h+n\choose n}{d-h+n\choose n}$ is equivalent to $d-k\leq h.$ Indeed we can rewrite it as
$ {d-k+n\choose n}/{d-k+k-h+n\choose n}\leq {h+n\choose n}/{h+k-h+n\choose n}$
and observe that the function ${x+n\choose n}/{x+k-h+n \choose n}=\prod_{i=1}^n \frac{x+i}{x+k-h+i}$ is increasing for $x\geq 0$. 
\noindent
The kernel (\ref{eq:lineartrunc}) is $H^0Q_{k,h}(d-h)$ so we aim to show that  $H^0Q_{k,h}(d-h)=0$ if $0\leq d-k\leq h$. Recall the filtration of $Q_{k,h}(d-h)$, known by Proposition \ref{prop:quiverpp}, with quotients $S^i\Omega(d)$ for $i=h+1,\ldots k$ and the associated quiver
$$[S^{h+1}\Omega(d)]\to\cdots\to [S^{k}\Omega(d)].$$ 
We will apply Theorem A of \cite{bora}, which in this context says that if for some $i=h+1,\ldots,k$ one has $H^0S^i\Omega(d)\cong U\not=0$ as a $G$-module and if there is also some  $j\leq k$ such that $H^1S^j\Omega(d)\cong U$ as a $G$-module, then $U$ does not appear in the decomposition of $H^0Q_{k,h}(d-h)$ as a $G$-module, otherwise it does. If $d<2(h+1)$ then all $H^0S^i\Omega(d)=0$ by (\ref{eq:h0symmomega}) and hence  one  has immediatedly $H^0Q_{k,h}(d-h)=0$. Otherwise we have $2(h+1)\leq d\leq h+k$ and define $r$ as the maximum such that $2r\leq d$. For any $i\geq 0$ with $h+1\leq i\leq r$ one has $H^0S^{i}\Omega(d)=S_{d-i,i} V$  by (\ref{eq:h0symmomega})  and $H^1S^{d-i+1}\Omega(d)=S_{d-i,i} V$ by (\ref{eq:h1symmomega}), so it remains to show that $d-i+1\leq k$. This follows from $d\leq h+k\leq i-1+k$. \end{proof}
\noindent
{\bf Remark.} Note that, because of the necessary and sufficient condition $d\leq h+k$ for injectivity given in the preceding proof, the Taylor truncation map $$H^0\P^k\OO(d)\to H^0\P^h\OO(d)$$ can be either injective or surjective and both cases can occur.

\end{document}